\documentclass[12pt,a4paper]{article}
\usepackage{amsmath}
\usepackage{amsfonts}
\usepackage{amsthm}
\usepackage{amssymb}

\usepackage{enumerate}
%
%
%
%
\theoremstyle{plain}
\newtheorem{theo}{Theorem}[section]
\newtheorem{lem}[theo]{Lemma}
\newtheorem{prop}[theo]{Proposition}
%

%
%
\theoremstyle{definition}
\newtheorem{definition}[theo]{Definition}
%
%
\theoremstyle{remark}
\newtheorem{rem}[theo]{Remark}
%
%

%
%
\numberwithin{equation}{section}

%
%
%
%
%
%
\newcommand{\C}{\mathbb{C}}

\newcommand{\R}{\mathbb{R}}
\newcommand{\N}{\mathbb{N}}
\newcommand{\M}{\mathbb{M}}
\newcommand{\divrg}{\textrm{div}\,}

%
%
%
%
%
\title{Estimating area of inclusions in anisotropic plates {}from boundary
data
\thanks{Work supported by MIUR, PRIN no. 2003082352, no. 2004011204.}}
\author{Antonino Morassi\thanks{Dipartimento di Ingegneria Civile e Architettura,
Universit\`a degli Studi di Udine, via Cotonificio 114, 33100
Udine, Italy. E-mail: \textsf{antonino.morassi@uniud.it}} \ Edi
Rosset\thanks{Dipartimento di Matematica e Informatica,
Universit\`a degli Studi di Trieste, via Valerio 12/1, 34100
Trieste, Italy. E-mail: \textsf{rossedi@univ.trieste.it}} \ and
Sergio Vessella\thanks{DIMAD, Universit\`a degli Studi di Firenze,
Via Lombroso 6/17, 50134 Firenze, Italy,
\textsf{sergio.vessella@dmd.unifi.it}}}

\begin{document}

\maketitle

\noindent \textbf{Abstract.} We consider the inverse problem of
determining the possible presence of an inclusion in a thin plate
by boundary measurements. The plate is made by non-homogeneous
linearly elastic material belonging to a general class of
anisotropy. The inclusion is made by different elastic material.
Under some a priori assumptions on the unknown inclusion, we prove
constructive upper and lower estimates of the area of the unknown
defect in terms of an easily expressed quantity related to work,
which is given in terms of measurements of a couple field applied
at the boundary and of the induced transversal displacement and
its normal derivative taken at the boundary of the plate.
\medskip

\medskip

\noindent \textbf{Mathematical Subject Classifications (2000):}
35R30, 35R25, 73C02.

\medskip

\medskip

\noindent \textbf{Key words:} inverse problems, elastic plates,
inclusions, three spheres inequalities, size estimates, unique
continuation.

\section{Introduction} \label{sec:
introduction}

In this paper we consider an inverse problem in linear elasticity
consisting in the identification of an inclusion in a thin plate
by boundary measurements. Let $\Omega$ denote the middle plane of
the plate and let $h$ be its constant thickness. The inclusion $D$
is modelled as a plane subdomain compactly contained in $\Omega$.
Suppose we make the following diagnostic test. We take a
\textit{reference plate}, i.e. a plate without inclusion, and we
deform it by applying a couple field $\hat{M}$ at its boundary.
Let $W_0$ be the work exerted in deforming the specimen. Now, we
repeat the same experiment on a possibly defective plate. The
exerted work generally changes and assumes, say, the value $W$. In
this paper we want to find constructive estimates, {}from above
and {}from below, of the \textit{area} of the unknown inclusion
$D$ in terms of the difference $|W-W_0|$.

{}From the mathematical point of view, see \cite{l:fi}, \cite{l:gu} the
infinitesimal deformation of the defective plate is governed by
the fourth order Neumann boundary value problem
\begin{equation}
  \label{eq:intr.equation_with_D}
  \divrg(\divrg ((\chi_{\Omega \setminus {D}}{\mathbb P} + \chi_{D}
  \widetilde {\mathbb
P})\nabla^2 w))=0,
  \quad \mathrm{in}\ \Omega,
\end{equation}
\begin{equation}
  \label{eq:intr.bc1_with_D}
  ({\mathbb P} \nabla^2 w)n\cdot n=-\widehat {M}_n, \quad \mathrm{on}\ \partial
  \Omega,
\end{equation}
\begin{equation}
  \label{eq:intr.bc2_with_D}
  \divrg({\mathbb P} \nabla^2 w)\cdot n+(({\mathbb P} \nabla^2
  w)n\cdot \tau),_s
  =(\widehat M_\tau),_s, \quad \mathrm{on}\ \partial
  \Omega,
\end{equation}
where $w$ is the transversal displacement of the plate and
$\widehat M_{\tau}$, $\widehat M_n$ are the twisting and bending
components of the assigned couple field $\widehat M$,
respectively. In the above equations $\chi_D$ denotes the
characteristic function of $D$ and $n$, $\tau$ are the unit outer
normal and the unit tangent vector to $\partial \Omega$,
respectively. The plate tensors $\mathbb P$, $\widetilde {\mathbb
P}$ are given by
\begin{equation}
    \label{eq:PandC}
    \mathbb P = \frac{h^3}{12}\mathbb C, \quad \widetilde {\mathbb P} =
    \frac{h^3}{12} \widetilde {\mathbb C},
\end{equation}
where $\mathbb C$ is the elasticity tensor describing the response
of the material in the reference plate $\Omega$, whereas
$\widetilde {\mathbb C}$ denotes the (unknown) corresponding
tensor for the inclusion $D$. The work exerted by the couple field
$\widehat M$ has the expression
\begin{equation}
  \label{eq:intr.W}
  W=-\int_{\partial\Omega}\widehat M_{\tau,s}w+\widehat M_nw,_n.
  \end{equation}
When the inclusion $D$ is absent, the equilibrium problem
\eqref{eq:intr.equation_with_D}-\eqref{eq:intr.bc2_with_D} becomes
\begin{equation}
  \label{eq:4.equation_without D}
  \divrg(\divrg ( {\mathbb
P}\nabla^2 w_0))=0,
  \quad \mathrm{in}\ \Omega,
\end{equation}
\begin{equation}
  \label{eq:4.bc1_without_D}
  ({\mathbb P} \nabla^2 w_0)n\cdot n=-\widehat M_n, \quad \mathrm{on}\ \partial
  \Omega,
\end{equation}
\begin{equation}
  \label{eq:4.bc2_without_D}
  \divrg({\mathbb P} \nabla^2 w_0)\cdot n+(({\mathbb P} \nabla^2
  w_0)n\cdot \tau),_s
  =(\widehat M_\tau),_s, \quad \mathrm{on}\ \partial
  \Omega,
\end{equation}
where $w_0$ is the transversal displacement of the reference
plate. The corresponding external work exerted by $\widehat M$ is
given by
\begin{equation}
  \label{eq:intr.W_0}
  W_0=-\int_{\partial\Omega}\widehat M_{\tau,s}w_0+\widehat M_nw_{0,n}.
  \end{equation}
Our main result (see Theorem \ref{theo:4.main}) states that if,
for a given $h_1 >0$, the following \textit{fatness-condition}
\begin{equation}
    \label{eq:intr.fatness}
    area
    \left (
    \{x \in D | \ dist\{ x, \partial D\} > h_1 \}
    \right )
    \geq \frac{1}{2} area (D)
\end{equation}
holds, then
\begin{equation}
    \label{eq:intr.stime}
    C_1 \left |
    \frac{W-W_0}{W_0}
    \right |
    \leq
    area(D)
    \leq
    C_2 \left |
    \frac{W-W_0}{W_0}
    \right |,
\end{equation}
where the constants $C_1$, $C_2$ only depend on the a priori data.
Estimates \eqref{eq:intr.stime} are established under some
suitable ellipticity and regularity assumptions on the plate
tensor $\mathbb C$ and on the jump $\widetilde {\mathbb C} -
\mathbb C$.

Analogous bounds in plate theory were obtained in \cite{M-R-V1}
and \cite{M-R-V2} and recently in the context of shallow shells in
\cite{l:dlw}. The reader is referred to \cite{l:kss}, \cite{l:ar},
\cite{l:ars} for size estimates of inclusions in the context of
the electrical impedance tomography and to \cite{ikehata98},
\cite{l:amr03}, \cite{l:amr04}, \cite{l:amrv08} for corresponding
problems in two and three-dimensional linear elasticity. See also
\cite{l:LDiN09} for an application of the size estimates approach
in thermography. However, differently {}from \cite{M-R-V1} and
\cite{M-R-V2}, here we work under very general assumptions on the
constitutive properties of the reference plate, which is assumed
to be made by nonhomogeneous \textit{anisotropic} elastic material
satisfying the \textit{dichotomy condition}
\eqref{3.D(x)bound}--\eqref{3.D(x)bound 2} only. This choice
introduces significant difficulties in obtaining the upper bound
for $area(D)$, as we shall discuss shortly.

The first step of the proof of \textit{area estimates}
\eqref{eq:intr.stime} consists in proving that the strain energy
of the reference plate stored in the set $D$ is comparable with
the difference between the works exerted by the boundary couple
fields in deforming the plate with and without the inclusion. More
precisely, we have the following double inequality
\begin{equation}
    \label{eq:intr.energy-lemma}
    K_1
    \int_D | \nabla^2 w_0 |^2
    \leq
    |W-W_0|
    \leq K_2
    \int_D | \nabla^2 w_0 |^2,
\end{equation}
for suitable constants $K_1$, $K_2$ only depending on the a priori
data (see Lemma \ref{lem:7.2}). The proof of these bounds is based
on variational considerations and has been obtained in
\cite{M-R-V1} (Lemma 5.1).

The lower bound for $area(D)$ follows {}from the right hand side
of \eqref{eq:intr.energy-lemma} and {}from regularity estimates
for solutions to the fourth order elliptic equation
\eqref{eq:4.equation_without D} governing the equilibrium problem
in the anisotropic case.

In order to obtain the upper bound for $area(D)$ {}from the left
hand side of \eqref{eq:intr.energy-lemma}, the next issue is to
estimate {}from below $\int_D | \nabla^2 w_0 |^2$. This task is
rather technical and involves quantitative estimates of unique
continuation in the form of three spheres inequalities for the
hessian $\nabla^2 w_0$ of the reference solution $w_0$ to equation
\eqref{eq:4.equation_without D}. It is exactly to this point that
the dichotomy condition \eqref{3.D(x)bound}--\eqref{3.D(x)bound 2}
on the tensor $\mathbb C$ is needed. More precisely, it was shown
in \cite{M-R-V5} that if $\mathbb C$ satisfies the dichotomy
condition, then the plate operator of equation
\eqref{eq:4.equation_without D} can be written as the sum of a
product of two second order uniformly elliptic operators with
regular coefficients and a third order operator with bounded
coefficients. Then, Carleman estimates can be developed to derive
a three spheres inequality for $\nabla^2 w_0$ (see Theorem $6.2$
of \cite{M-R-V5}). The reader is referred to the paper
\cite{M-R-V5} for the necessary background.

The paper is organized as follows. Some basic notation is
introduced in Section \ref{sec:notation}. In Section 3 we state
the main result, Theorem \ref{theo:4.main}, which is proved in
Section 4. Section 5 is devoted to the proof of the
\textit{Lipschitz propagation of smallness property} (see
Proposition \ref{prop:6.1}), which is used in the proof of Theorem
\ref{theo:4.main}.

\section{Notation} \label{sec:notation}

We shall denote by $B_r(P)$ the disc in $\R^2$ of radius $r$ and
center $P$.

\noindent When representing locally a boundary as a graph, we use
the following notation. For every $x\in{\R}^2$ we set $x=(x_1,x_2)$,
where $x_1,x_2\in{\R}$.
\begin{definition}
  \label{def:2.1} (${C}^{k,1}$ regularity)
Let $\Omega$ be a bounded domain in ${\R}^{2}$. Given $k$, with
$k\in\N$, we say that a portion $S$ of $\partial \Omega$ is of
 \textit{class ${C}^{k,1}$ with constants $\rho_{0}$, $M_{0}>0$}, if,
for any $P \in S$, there exists a rigid transformation of
coordinates under which we have $P=0$ and
\begin{equation*}
  \Omega \cap B_{\rho_{0}}(0)=\{x=(x_1,x_2) \in B_{\rho_{0}}(0)\quad | \quad
x_2>\psi(x_1)
  \},
\end{equation*}
where $\psi$ is a ${C}^{k,1}$ function on
$\left(-\rho_0,\rho_0\right)$ satisfying
\begin{equation*}
\psi(0)=0,
\end{equation*}
\begin{equation*}
\nabla \psi (0)=0, \quad \hbox {when } k \geq 1,
\end{equation*}
\begin{equation*}
\|\psi\|_{{C}^{k,1}\left(-\rho_0,\rho_0\right)} \leq M_{0}\rho_{0}.
\end{equation*}

\medskip
\noindent When $k=0$, we also say that $S$ is of \textit{Lipschitz
class with constants $\rho_{0}$, $M_{0}$}.
\end{definition}
\begin{rem}
  \label{rem:2.1}
  We use the convention to normalize all norms in such a way that their
  terms are dimensionally homogeneous with their argument and coincide with the
  standard definition when the dimensional parameter equals one.
  For instance, given a function $u:\Omega\mapsto \R$, where $\partial
\Omega$ satisfies Definition \ref{def:2.1}, we denote
\begin{equation*}
  \|u\|_{{C}^{1,1}(\Omega)} =
  \|u\|_{{L}^{\infty}(\Omega)}+
  \rho_{0}\|\nabla u\|_{{L}^{\infty}(\Omega)}+
  {\rho_{0}}^{2}\|{\nabla}^{2} u\|_{{L}^{\infty}(\Omega)},
\end{equation*}
and
\begin{equation*}
\|u\|_{H^2(\Omega)}=\rho_0^{-1}\left(\int_\Omega u^2
+\rho_0^2\int_\Omega|\nabla u|^2 +\rho_0^4\int_\Omega|\nabla^2
u|^2\right)^{\frac{1}{2}},
\end{equation*}
and so on for boundary and trace norms such as
$\|\cdot\|_{H^{\frac{1}{2}}(\partial\Omega)}$,
$\|\cdot\|_{H^{-\frac{1}{2}}(\partial\Omega)}$.

\end{rem}

For any $r>0$ we denote
\begin{equation}
  \label{eq:2.int_env}
  \Omega_{r}=\{x \in \Omega \mid \hbox{dist}(x,\partial \Omega)>r
  \}.
\end{equation}
Given a bounded domain $\Omega$ in $\R^2$ such that $\partial
\Omega$ is of class $C^{k,1}$, with $k\geq 1$, we consider as
positive the orientation of the boundary induced by the outer unit
normal $n$ in the following sense. Given a point
$P\in\partial\Omega$, let us denote by $\tau=\tau(P)$ the unit
tangent at the boundary in $P$ obtained by applying to $n$ a
counterclockwise rotation of angle $\frac{\pi}{2}$, that is
\begin{equation}
    \label{eq:2.tangent}
        \tau=e_3 \times n,
\end{equation}
where $\times$ denotes the vector product in $\R^3$ and $\{e_1, e_2, e_3\}$
is the canonical basis in $\R^3$.

Given any connected component $\cal C$ of $\partial \Omega$ and
fixed a point $P_0\in\cal C$, let us define as positive the
orientation of $\cal C$ associated to an arclength
parameterization $\varphi(s)=(x_1(s), x_2(s))$, $s \in [0, l(\cal
C)]$, such that $\varphi(0)=P_0$ and
$\varphi'(s)=\tau(\varphi(s))$. Here $l(\cal C)$ denotes the
length of $\cal C$.

Throughout the paper, we denote by $w,_i$, $w,_s$, and $w,_n$ the
derivatives of a function $w$ with respect to the $x_i$ variable, to
the arclength $s$ and to the normal direction $n$, respectively, and
similarly for higher order derivatives.

We denote by $\M^2$ the space of $2 \times 2$ real valued matrices
and by ${\cal L} (X, Y)$ the space of bounded linear operators
between Banach spaces $X$ and $Y$.

For every pair of real $2$-vectors $a$ and $b$, we denote by $a
\cdot b$ the scalar product of $a$ and $b$.
For every $2 \times 2$ matrices $A$, $B$ and for every $\mathbb{L}
\in{\cal L} ({\M}^{2}, {\M}^{2})$, we use the following notation:
\begin{equation}
  \label{eq:2.notation_1}
  ({\mathbb{L}}A)_{ij} = L_{ijkl}A_{kl},\quad A \cdot B = A_{ij}B_{ij}, \quad |A|= (A \cdot A)^{\frac {1} {2}},
\end{equation}

\begin{equation}
  \label{eq:2.notation_3bis}
  A^{sym} =  \frac{1}{2} \left ( A + A^T \right ),
\end{equation}
where, here and in the sequel, summation over repeated indexes
is implied.

Moreover we say that
\begin{equation}
  \label{eq:2.notation_4}
  \widetilde {\mathbb L} \leq {\mathbb L},
\end{equation}
if and only if, for every $2 \times 2$ symmetric matrix $A$,
\begin{equation}
  \label{eq:2.notation_5}
  \widetilde {\mathbb L}A \cdot A \leq {\mathbb L}A \cdot A.
\end{equation}

\section{The main result} \label{sec:
direct}

Let us consider a thin plate
$\Omega\times[-\frac{h}{2},\frac{h}{2}]$ with middle surface
represented by a bounded domain $\Omega$ in $\R^2$ and having
uniform thickness $h$, $h<<\hbox{diam}(\Omega)$. We assume that
$\partial\Omega$ is of class $C^{1,1}$ with constants $\rho_0$,
$M_0$ and that, for a given positive number $M_1$, satisfies
\begin{equation}
  \label{eq:M_1}
  area(\Omega) \equiv |\Omega|\leq M_1\rho_0^2.
\end{equation}
We shall assume throughout that the elasticity tensor $\C$ of the
reference plate is known and has cartesian components $C_{ijkl}$
which satisfy the following symmetry conditions
\begin{equation}
  \label{eq:sym-conditions-C-components}
    C_{ijkl}(x)=C_{klij}(x)=C_{lkij}(x), \quad \hbox{ }{i,j,k,l=1,2},
     \hbox{ a.e. in } \Omega.
\end{equation}
On the elasticity tensor $\C$ let us make the following
assumptions:
\begin{enumerate}[i)]
\item \textit{Ellipticity (strong convexity)}

There exists a positive constant $\gamma$ such that
\begin{equation}
  \label{eq:3.convex}
    {\C}A \cdot A \geq \gamma |A|^2, \qquad \hbox{a.e. in } \Omega,
\end{equation}
for every $2\times 2$ symmetric matrix $A$.
\item \textit{$C^{1,1}$ regularity}

There exists $M>0$ such that
\begin{equation}
  \label{eq:4.regul}
  \sum_{i,j,k,l=1}^2 \sum_{m=0}^2 \rho_0^m \|\nabla^m C_{ijkl}\|_{L^\infty(\R^2)} \leq
    M.
\end{equation}

Condition \eqref{eq:sym-conditions-C-components} implies that
instead of $16$ coefficients we actually deal with $6$
coefficients and we denote
\begin{center}
\( {\displaystyle \left\{
\begin{array}{lr}
    C_{1111}=A_0, \ \ C_{1122}=C_{2211}=B_0,
         \vspace{0.12em}\\
    C_{1112}=C_{1121}=C_{1211}=C_{2111}=C_0,
        \vspace{0.12em}\\
    C_{2212}=C_{2221}=C_{1222}=C_{2122}=D_0,
        \vspace{0.12em}\\
    C_{1212}=C_{1221}=C_{2112}=C_{2121}=E_0,
        \vspace{0.12em}\\
    C_{2222}=F_0,
        \vspace{0.25em}\\
\end{array}
\right. } \) \vskip -3.0em
\begin{eqnarray}
\ & & \label{3.coeff6}
\end{eqnarray}
\end{center}
with
\begin{equation}
    \label{3.coeffsmall}
    a_0=A_0, \ a_1=4C_0, \ a_2=2B_0+4E_0, \ a_3=4D_0, \ a_4=F_0.
\end{equation}
Let $S(x)$ be the following $7\times 7$ matrix
\begin{equation}
    \label{3. S(x)}
    S(x) = {\left(
\begin{array}{ccccccc}
  a_0   & a_1   & a_2   & a_3   & a_4   & 0    &    0    \\
  0     & a_0   & a_1   & a_2   & a_3   & a_4  &    0    \\
  0     & 0     & a_0   & a_1   & a_2   & a_3  &    a_4  \\
  4a_0  & 3a_1  & 2a_2  & a_3   & 0     & 0    &    0    \\
  0     & 4a_0  & 3a_1  & 2a_2  & a_3   & 0    &    0    \\
  0     & 0     & 4a_0  & 3a_1  & 2a_2  & a_3  &    0    \\
  0     & 0     & 0     & 4a_0  & 3a_1  & 2a_2 &    a_3  \\
  \end{array}
\right)},
\end{equation}
and
\begin{equation}
    \label{3.D(x)}
    {\mathcal{D}}(x)= \frac{1}{a_0} |\det S(x)|.
\end{equation}
On the elasticity tensor $\mathbb{C}$ we make the following
additional assumption:

\item \textit{Dichotomy condition}
\begin{subequations}
\begin{eqnarray}
\label{3.D(x)bound} either &&  {\mathcal{D}}(x)>0,
\quad\hbox{for every } x\in \R^2, \\[2mm]
\label{3.D(x)bound 2} or && {\mathcal{D}}(x)=0, \quad\hbox{for
every } x\in \R^2,
\end{eqnarray}
\end{subequations}
where ${\mathcal{D}}(x)$ is defined by \eqref{3.D(x)}.
\begin{rem}
  \label{rem:dichotomy} Whenever \eqref{3.D(x)bound} holds we denote
\begin{equation}
    \label{delta-1}
    \mu=\min_{\R^2}{\mathcal{D}}.
\end{equation}
We emphasize that, in all the following statements, whenever a
constant is said to depend on $\mu$ (among other quantities) it is
understood that such dependence occurs \textit{only} when
\eqref{3.D(x)bound} holds.
\end{rem}
\end{enumerate}
Let $D\times [- \frac{h}{2}, \frac{h}{2}]$ be a possible unknown
inclusion in the plate, where $D$ is a measurable, possibly
disconnected subset of $\Omega$ satisfying
\begin{equation}
  \label{eq:4.d_0}
  \hbox{dist}(D, \partial \Omega) \geq d_{0}\rho_0,
\end{equation}
for some positive constant $d_0$.

Concerning the material forming the inclusion, we assume that the
corresponding elasticity tensor $\widetilde{{\mathbb
C}}=\widetilde{{\mathbb C}}(x)$ belongs to $L^\infty(\Omega,{\cal
L} ({\M}^{2}, {\M}^{2}))$ and has Cartesian components which
satisfy the symmetry conditions
\begin{equation}
  \label{eq:sym-conditions-Ctilde-components}
    \widetilde{C}_{ijkl}(x)=\widetilde{C}_{klij}(x)=\widetilde{C}_{lkij}(x), \quad \hbox{ }{i,j,k,l=1,2},
     \hbox{ a.e. in } \Omega.
\end{equation}
Moreover, we assume the following \textit{jump conditions} on
$\widetilde {\mathbb C}$: either there exist $\eta_0>0$ and
$\eta_1>1$ such that
\begin{equation}
  \label{eq:4.jump+}
  \eta_0{\mathbb C} \leq \widetilde {\mathbb C}-{\mathbb C} \leq (\eta_1-1){\mathbb
  C},
  \quad \hbox{a.e. in } \Omega,
  \end{equation}
\noindent or there exist $\eta_0>0$ and $0<\eta_1<1$ such that
\begin{equation}
  \label{eq:4.jump-}
  -(1-\eta_1){\mathbb C} \leq \widetilde {\mathbb C}-{\mathbb C} \leq -\eta_0{\mathbb
  C},
  \quad \hbox{a.e. in }\Omega.
\end{equation}
Let us assume that the body forces inside the plate are absent and
that a couple field $\hat{M}$ is acting on the boundary of
$\Omega$. We shall assume:
\begin{equation}
  \label{eq:emme-reg+}
  \widehat M\in L^2(\partial\Omega,\R^2),
\end{equation}
\begin{equation}
  \label{eq:emme-supp1}
  \hbox{supp}(\widehat M)\subset\Gamma,
\end{equation}
where $\Gamma$ is an open subarc of $\partial\Omega$, such that
\begin{equation}
  \label{eq:emme-supp2}
  |\Gamma|\leq (1-\delta_0)|\partial\Omega|,
\end{equation}
for some positive constant $\delta_0$. Moreover, we obviously
assume the \textit{compatibility conditions} on the boundary
couple field $\widehat {M}$
\begin{equation}
  \label{eq:emme-comp-cond}
  \int_{\partial\Omega}\widehat M_\alpha=0,\qquad\alpha=1,2,
\end{equation}
and that, for a given constant $F>0$,
\begin{equation}
  \label{eq:emme-frequency}
  \frac{\|\widehat M \|_{L^2(\partial
\Omega,\R^2)}}{\|\widehat M \|_{H^{-\frac{1}{2}}(\partial
\Omega,\R^2)}}\leq F.
\end{equation}

Let us notice that, following a standard convention in the theory of plates, we
represent the boundary couple field $\widehat M$ in cartesian
coordinates as
\begin{equation}
  \label{eq:emme-cartesiane}
  \widehat M= \widehat M_2 e_1+\widehat M_1 e_2, \qquad \hbox{on }
  \partial\Omega.
\end{equation}
The equilibrium problem of the plate with and without inclusion is
described by the Neumann problem
\eqref{eq:intr.equation_with_D}-\eqref{eq:intr.bc2_with_D} and
\eqref{eq:4.equation_without D}-\eqref{eq:4.bc2_without_D},
respectively. Under the above assumptions, the problems
\eqref{eq:intr.equation_with_D}-\eqref{eq:intr.bc2_with_D} and
\eqref{eq:4.equation_without D}-\eqref{eq:4.bc2_without_D} have
solutions  $w\in H^2(\Omega)$, $w_0\in H^2(\Omega)$, respectively.
These solutions are uniquely determined by imposing the
normalization conditions
\begin{equation}
  \label{eq:normalization-w}
  \int_\Omega w=0, \qquad \int_\Omega w,_\alpha=0,\qquad\alpha=1,2,
\end{equation}
\begin{equation}
  \label{eq:normalization-w0}
  \int_\Omega w_0=0, \qquad \int_\Omega
  w_0,_\alpha=0,\qquad\alpha=1,2.
\end{equation}
We recall that the quantities $W$, $W_0$ defined by
\eqref{eq:intr.W}, \eqref{eq:intr.W_0} represent the work exerted
by the boundary value couple field $\hat{M}$ when the inclusion
$D$ is present or absent, respectively. By the weak formulation of
problems
\eqref{eq:intr.equation_with_D}--\eqref{eq:intr.bc2_with_D} and
\eqref{eq:4.equation_without D}--\eqref{eq:4.bc2_without_D}, the
works $W$ and $W_0$ coincide with the strain energies stored in
the plate, namely
\begin{equation}
     \label{eq:energy-with-D}
    W=\int_\Omega ( \chi_{\Omega \setminus D} \mathbb P + \chi_D
    \widetilde{\mathbb P}) \nabla^2 w \cdot \nabla^2 w,
\end{equation}
\begin{equation}
     \label{eq:energy-without-D}
    W_0=\int_\Omega \mathbb P \nabla^2 w_0 \cdot \nabla^2
    w_0.
\end{equation}
We are now in position to state the main result of this paper.
\begin{theo}
  \label{theo:4.main}
  Let $\Omega$ be a bounded domain in $\R^{2}$, such that $\partial \Omega$
is of
  class ${C}^{2,1}$ with constants $\rho_{0}, M_{0}$ and satisfying \eqref{eq:M_1}. Let $D$ be a
  measurable subset of $\Omega$ satisfying \eqref{eq:4.d_0} and
\begin{equation}
  \label{eq:4.fat}
  \left| D_{h_{1}\rho_0} \right| \geq \frac {1} {2} \left| D \right|,
  \end{equation}
for a given positive constant $h_{1}$. Let ${\mathbb P}$ given by
\eqref{eq:PandC} satisfy \eqref{eq:sym-conditions-C-components},
\eqref{eq:3.convex}, \eqref{eq:4.regul} and the dichotomy
condition \eqref{3.D(x)bound}--\eqref{3.D(x)bound 2}. Let
$\widetilde {\mathbb P}\in L^\infty(\Omega,{\cal L} ({\M}^{2},
{\M}^{2}))$, defined by \eqref{eq:PandC}, satisfy
\eqref{eq:sym-conditions-Ctilde-components}. Let $\widehat M \in
L^2(\partial \Omega, \R^2)$ satisfy \eqref{eq:emme-supp1}--\eqref{eq:emme-frequency}. If
\eqref{eq:4.jump+} holds, then we have
\begin{equation}
  \label{eq:4.size_estimate+}
  \frac {1} {\eta_1-1} C^{+}_{1}\rho_0^2
  \frac{W_0-W}{W_0}
  \leq
  |D|
  \leq
  \frac {\eta_1} {\eta_0} C^{+}_{2}\rho_0^2
  \frac{W_0-W}{W_0}
  .
  \end{equation}
If, conversely, \eqref{eq:4.jump-} holds, then we have
\begin{equation}
  \label{eq:4.size_estimate-}
  \frac {\eta_1} {1-\eta_1} C^{-}_{1}\rho_0^2
  \frac{W-W_0}{W_0}
  \leq
  |D|
  \leq
  \frac {1} {\eta_0} C^{-}_{2}\rho_0^2
  \frac{W-W_0}{W_0}
  ,
  \end{equation}
where $C^{+}_{1}$, $C^{-}_{1}$ only depend on $h$, $M_{0}$, $M_1$,
$d_{0}$, $\gamma$, $\mu$, $M$, whereas $C^{+}_{2}$, $C^{-}_{2}$
only depend on the same quantities and also on $\delta_0$, $h_{1}$
and $F$.
\end{theo}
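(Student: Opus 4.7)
The strategy follows the two-step reduction already outlined in the introduction. The starting point is Lemma \ref{lem:7.2}, which sandwiches $|W-W_0|$ between two multiples of $\int_D|\nabla^2 w_0|^2$ with prefactors depending explicitly on the jump parameters: under \eqref{eq:4.jump+} one has $K_1\sim \eta_0$ and $K_2\sim(\eta_1-1)$, while under \eqref{eq:4.jump-} these rescale to $\eta_0/\eta_1$ and $(1-\eta_1)/\eta_1$; the sign of $W-W_0$ in each case is dictated by the sign of $\widetilde{\mathbb P}-\mathbb P$ through \eqref{eq:energy-with-D}--\eqref{eq:energy-without-D}. The remaining task is to sandwich $\int_D|\nabla^2 w_0|^2$ between two multiples of $|D|\,W_0/\rho_0^2$, so that Lemma \ref{lem:7.2} translates directly into \eqref{eq:4.size_estimate+}--\eqref{eq:4.size_estimate-}.

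For the \emph{lower bound} on $|D|$ I would use the right inequality of Lemma \ref{lem:7.2}. Since $\mathrm{dist}(D,\partial\Omega)\geq d_0\rho_0$ and the coefficients of \eqref{eq:4.equation_without D} are $C^{1,1}$ by \eqref{eq:4.regul}, interior Schauder-type estimates for the fourth order operator give
$$\|\nabla^2 w_0\|_{L^\infty(D)}^2 \leq \frac{C}{\rho_0^2}\int_\Omega |\nabla^2 w_0|^2,$$
with $C$ depending only on the a priori data. Combining with the ellipticity bound $W_0\geq \gamma(h^3/12)\int_\Omega|\nabla^2 w_0|^2$ from \eqref{eq:3.convex} yields $\int_D|\nabla^2 w_0|^2\leq |D|\,\|\nabla^2 w_0\|_{L^\infty(D)}^2\leq C|D|\,W_0/\rho_0^2$, and hence the lower bounds on $|D|$ in \eqref{eq:4.size_estimate+}--\eqref{eq:4.size_estimate-}.

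For the \emph{upper bound} on $|D|$ I would need the matching lower estimate
$$\int_D|\nabla^2 w_0|^2 \geq \frac{c|D|}{\rho_0^2}\int_\Omega|\nabla^2 w_0|^2,$$
which is exactly the content of the Lipschitz propagation of smallness Proposition \ref{prop:6.1}: at every $x$ lying sufficiently deep inside $\Omega$ one has $\int_{B_r(x)}|\nabla^2 w_0|^2\geq c\int_\Omega|\nabla^2 w_0|^2$ at a scale $r$ comparable to $h_1\rho_0$. The fatness hypothesis \eqref{eq:4.fat} guarantees $|D_{h_1\rho_0}|\geq |D|/2$, so a Vitali-type covering of $D_{h_1\rho_0}$ by disjoint discs of radius $\sim h_1\rho_0$ contained in $D$, followed by summation of Proposition \ref{prop:6.1} over the centers, produces the desired inequality. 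Inserting it into the left inequality of Lemma \ref{lem:7.2} and using $W_0\leq C\int_\Omega|\nabla^2 w_0|^2$ (from \eqref{eq:4.regul}) yields the upper bounds on $|D|$ with the stated dependence on $\eta_0$ and $\eta_1$.

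The main obstacle is Proposition \ref{prop:6.1} itself, whose proof (deferred to Section 5) rests on the three-spheres inequality for $\nabla^2 w_0$ established in \cite{M-R-V5}. This is precisely where the \emph{dichotomy condition} \eqref{3.D(x)bound}--\eqref{3.D(x)bound 2} is used: it permits the plate operator of \eqref{eq:4.equation_without D} to be factored as the composition of two second-order uniformly elliptic operators up to a third-order remainder, enabling the Carleman estimate that underlies the three-spheres inequality and accounting for the appearance of $\mu$ in the constants. The parameters $\delta_0$ and $F$ enter at this step, through the lower bound for $\int_\Omega|\nabla^2 w_0|^2$ in terms of the measurement $\widehat M$, which requires localizing the influence of the boundary couple field away from the inclusion.
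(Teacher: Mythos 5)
Your proposal reproduces the paper's argument essentially verbatim: both halves of Lemma~\ref{lem:7.2} are used in the same way, the lower bound on $|D|$ comes from an interior $L^\infty$ estimate for $\nabla^2 w_0$ on $D$ combined with Poincar\'e and the coercivity of $\mathbb P$, and the upper bound on $|D|$ comes from the fatness assumption together with a covering argument and Proposition~\ref{prop:6.1}. The only genuine difference is in the covering: you invoke a Vitali-type covering by pairwise disjoint discs of radius $\sim h_1\rho_0$ contained in $D$ and sum the Lipschitz propagation estimate over the centers, whereas the paper tiles $D_{h_1\rho_0}$ by internally nonoverlapping closed squares of side $\epsilon\rho_0$ and applies \eqref{eq:LPS} only to the inscribed disc of the square on which $\int_{Q_l}|\nabla^2 w_0|^2$ is smallest, using the cardinality bound $L\geq |D_{h_1\rho_0}|/(\epsilon\rho_0)^2$. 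Both routes work; yours is no more complicated, but note one point you glossed over: the radius must be taken as $\epsilon\rho_0/2$ with $\epsilon=\min\{2d_0/s,\,h_1/\sqrt 2\}$ (not simply $\sim h_1\rho_0$), since the disc centers must lie in $\Omega_{s\rho}$ for Proposition~\ref{prop:6.1} to apply, and $\mathrm{dist}(D,\partial\Omega)\geq d_0\rho_0$ is the only quantitative depth information available. This is why $d_0$ and the constant $s$ of Proposition~\ref{prop:6.1} enter the choice of scale, and why $C_2^\pm$ depend on $d_0$ but also on $h_1$, $\delta_0$, $F$.
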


\section{Proof of Theorem \ref{theo:4.main} } \label{sec:
proof_main}

The proof of Theorem \ref{theo:4.main} is mainly based on the
following key ingredients: energy estimates for the equilibrium
problems
\eqref{eq:intr.equation_with_D}--\eqref{eq:intr.bc2_with_D} and
\eqref{eq:4.equation_without D}--\eqref{eq:4.bc2_without_D} (Lemma
\ref{lem:7.2}) and an estimate of continuation {}from the interior
for solutions to the Neumann problem \eqref{eq:4.equation_without
D}--\eqref{eq:4.bc2_without_D} (Proposition \ref{prop:6.1}).

\begin{lem}
  \label{lem:7.2}
  Let the fourth-order tensor fields ${\mathbb P},\widetilde {\mathbb
  P}\in L^\infty(\Omega,{\cal L} ({\M}^{2}, {\M}^{2}))$
  given by
  \eqref{eq:PandC},
  satisfy the symmetry conditions
  \eqref{eq:sym-conditions-C-components} and
  \eqref{eq:sym-conditions-Ctilde-components}, respectively. Let
  $\widehat M\in H^{-\frac{1}{2}}(\partial\Omega,\R^2)$ satisfy \eqref{eq:emme-comp-cond}.
  Let
  $\xi_{0}$, $\xi_{1}$, $0<\xi_{0}<\xi_{1}$, be such that
\begin{equation}
  \label{eq:7.13}
  \xi_{0} |A|^{2} \leq {\mathbb P}(x)A \cdot A \leq
  \xi_{1}|A|^{2},
\quad \quad for
  \quad a.e. \quad x \in \Omega,
\end{equation}
for every symmetric matrix $A\in {\M}^2$, and let the jump
$(\widetilde {\mathbb P}(x) - {\mathbb P}(x))$ satisfy either
\eqref{eq:4.jump+} or \eqref{eq:4.jump-}. Let $w$, $w_{0} \in
H^{2}(\Omega)$ be the weak solutions to the problems
  \eqref{eq:intr.equation_with_D}--\eqref{eq:intr.bc2_with_D},
  \eqref{eq:4.equation_without D}--\eqref{eq:4.bc2_without_D} respectively.

If \eqref{eq:4.jump+} holds, then we have
\begin{equation}
  \label{eq:7.14+}
  \frac {\eta_0\xi_{0}} {\eta_1} \int_{D}| {\nabla^2} w_{0}|^{2}
  \leq
  W_0-W
  \leq
  (\eta_1-1)\xi_{1} \int_{D}| {\nabla^2} w_{0}|^{2}
 .
\end{equation}
If, instead, \eqref{eq:4.jump-} holds, then we have
\begin{equation}
  \label{eq:7.14-}
  \eta_0 \xi_{0} \int_{D}| {\nabla^2} w_{0}|^{2}
  \leq
  W-W_0
  \leq
  \frac {1-\eta_1}{\eta_1} \xi_{1} \int_{D}| {\nabla^2} w_{0}|^{2}
  .
\end{equation}
\end{lem}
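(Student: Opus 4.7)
The proof is purely variational: it uses only the weak formulations of the two Neumann problems, with no appeal to the dichotomy condition or any regularity theory. Let me abbreviate $\mathbb{P}_{*} := \chi_{\Omega\setminus D}\mathbb{P} + \chi_D \widetilde{\mathbb{P}}$. Both $w$ and $w_0$ satisfy weak formulations of the form $\int_\Omega Q \nabla^2 u \cdot \nabla^2 \varphi = L(\varphi)$ for all $\varphi \in H^2(\Omega)$, with common boundary linear functional $L(\varphi) = -\int_{\partial\Omega}\widehat{M}_{\tau,s}\varphi + \widehat{M}_n \varphi_{,n}$ and coefficient tensor $Q = \mathbb{P}_{*}$ or $Q = \mathbb{P}$, respectively. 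Testing each weak form against itself and cross-testing, while exploiting the major symmetry \eqref{eq:sym-conditions-C-components}--\eqref{eq:sym-conditions-Ctilde-components}, yields $W = \int_\Omega \mathbb{P}_{*} \nabla^2 w \cdot \nabla^2 w$ and $W_0 = \int_\Omega \mathbb{P} \nabla^2 w_0 \cdot \nabla^2 w_0$, together with the Betti-type reciprocities $\int_\Omega \mathbb{P}_{*} \nabla^2 w \cdot \nabla^2 w_0 = W_0$ and $\int_\Omega \mathbb{P} \nabla^2 w \cdot \nabla^2 w_0 = W$.

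Expanding $\int_\Omega Q \nabla^2 (w_0 - w) \cdot \nabla^2 (w_0 - w)$ for $Q \in \{\mathbb{P}, \mathbb{P}_{*}\}$ and substituting these identities together with $\mathbb{P}_{*} - \mathbb{P} = \chi_D(\widetilde{\mathbb{P}} - \mathbb{P})$, I would obtain two Kohn--Vogelius-type identities
\begin{align*}
W_0 - W &= \int_D (\widetilde{\mathbb{P}} - \mathbb{P}) \nabla^2 w_0 \cdot \nabla^2 w_0 - \int_\Omega \mathbb{P}_{*} \nabla^2 (w_0 - w) \cdot \nabla^2 (w_0 - w), \\
W_0 - W &= \int_D (\widetilde{\mathbb{P}} - \mathbb{P}) \nabla^2 w \cdot \nabla^2 w + \int_\Omega \mathbb{P} \nabla^2 (w_0 - w) \cdot \nabla^2 (w_0 - w).
\end{align*}
Each correction integral is a $Q$-quadratic energy of $w_0 - w$ for a positive-definite $Q$, hence non-negative.

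Under \eqref{eq:4.jump+}, which also implies $\mathbb{P}_{*} \leq \eta_1 \mathbb{P}$, the upper bound in \eqref{eq:7.14+} is immediate from the first identity: drop the non-negative subtracted correction and invoke \eqref{eq:7.13}. The lower bound is the main obstacle and requires combining both identities. Starting again from the first one, I would lower-bound its leading term by $\eta_0 \xi_0 \int_D |\nabla^2 w_0|^2$ via \eqref{eq:7.13}, and upper-bound the subtracted correction by using $\mathbb{P}_{*} \leq \eta_1 \mathbb{P}$ in $\Omega$ together with the second identity, which yields $\int_\Omega \mathbb{P}\,\nabla^2(w_0-w)\cdot \nabla^2(w_0-w) \leq W_0 - W$. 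Rearranging then isolates a lower bound of the form $\tfrac{\eta_0 \xi_0}{\eta_1}\int_D |\nabla^2 w_0|^2 \leq W_0 - W$; the two-sided character of the jump hypothesis is precisely what supplies the "condition number" $\eta_1/\eta_0$ that governs the final constant. The case \eqref{eq:4.jump-} is entirely symmetric: now $\widetilde{\mathbb{P}} - \mathbb{P}$ is negative definite, so $W - W_0 \geq 0$, and the roles of the two Kohn--Vogelius identities are exchanged.
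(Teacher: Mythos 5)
Your variational framework is the right one and, I believe, essentially matches the argument the paper cites (Lemma 5.1 of \cite{M-R-V1}): the two Kohn--Vogelius identities you write down are correct, and your derivation of the upper bounds in \eqref{eq:7.14+}, \eqref{eq:7.14-} from the first identity together with $\widetilde {\mathbb P} - \mathbb P \leq (\eta_1 - 1)\mathbb P$ and \eqref{eq:7.13} is clean and correct.

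However, the route you describe for the lower bound does not produce the stated constant. Setting $u = w_0 - w$ and $a = \widetilde {\mathbb P} - \mathbb P$, your plan is: from the first identity, bound the leading term below by $\eta_0 \xi_0 \int_D |\nabla^2 w_0|^2$, and bound the correction $\int_\Omega \mathbb P_* \nabla^2 u \cdot \nabla^2 u$ above by $\eta_1 \int_\Omega \mathbb P \nabla^2 u \cdot \nabla^2 u \leq \eta_1 (W_0 - W)$ via the second identity. Carrying that out gives
\begin{equation*}
W_0 - W \geq \eta_0 \xi_0 \int_D |\nabla^2 w_0|^2 - \eta_1 (W_0 - W),
\end{equation*}
i.e.\ $\frac{\eta_0 \xi_0}{1+\eta_1}\int_D |\nabla^2 w_0|^2 \leq W_0 - W$, with $1+\eta_1$ in the denominator, not the $\eta_1$ of \eqref{eq:7.14+}. "Rearranging then isolates" glosses over this: the arithmetic in your plan does not reach the sharper constant. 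To obtain $\frac{\eta_0 \xi_0}{\eta_1}$ one should instead exploit the auxiliary identity obtained by subtracting the two weak formulations and testing with $u$, namely $\int_\Omega \mathbb P_* \nabla^2 u \cdot \nabla^2 u = \int_D a\, \nabla^2 w_0 \cdot \nabla^2 u$. Applying Cauchy--Schwarz in the $a$-weighted inner product on $D$ gives
\begin{equation*}
\int_\Omega \mathbb P_* \nabla^2 u \cdot \nabla^2 u \leq \Bigl(\int_D a\, \nabla^2 w_0 \cdot \nabla^2 w_0\Bigr)^{\frac12}\Bigl(\int_D a\, \nabla^2 u \cdot \nabla^2 u\Bigr)^{\frac12},
\end{equation*}
and the pointwise inequality $a \leq (1 - \tfrac{1}{\eta_1})\widetilde{\mathbb P}$ on $D$ (which is just a rearrangement of $\widetilde{\mathbb P} \leq \eta_1 \mathbb P$) gives $\int_D a\, \nabla^2 u \cdot \nabla^2 u \leq (1 - \tfrac{1}{\eta_1})\int_\Omega \mathbb P_* \nabla^2 u \cdot \nabla^2 u$. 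Combining yields $\int_\Omega \mathbb P_* \nabla^2 u \cdot \nabla^2 u \leq (1-\tfrac{1}{\eta_1})\int_D a\,\nabla^2 w_0\cdot\nabla^2 w_0$, and substituting this into your first Kohn--Vogelius identity gives $W_0 - W \geq \tfrac{1}{\eta_1}\int_D a\,\nabla^2 w_0\cdot\nabla^2 w_0 \geq \tfrac{\eta_0\xi_0}{\eta_1}\int_D |\nabla^2 w_0|^2$ as claimed. The case \eqref{eq:4.jump-} requires the analogous correction. Your weaker constant is still positive and depends only on the stated quantities, so the overall scheme of Theorem \ref{theo:4.main} would survive with your argument, but as a proof of the lemma as stated it falls short by this factor, and the missing step is exactly the $a$-weighted Cauchy--Schwarz.
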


The proof of the above lemma is given in \cite{M-R-V1}, Lemma 5.1.

\begin{prop} [{\bf Lipschitz propagation of smallness}]
\label{prop:6.1} Let $\Omega$ be a bounded domain in $\R^{2}$,
such that $\partial \Omega$ is of class ${C}^{2,1}$ with constants
$\rho_{0}, M_{0}$ and satisfying \eqref{eq:M_1}. Let the fourth order tensor $\mathbb{P}$ be
defined by \eqref{eq:PandC} and satisfying
\eqref{eq:sym-conditions-C-components}, \eqref{eq:3.convex},
\eqref{eq:4.regul} and the dichotomy condition
\eqref{3.D(x)bound}--\eqref{3.D(x)bound 2}. Let $w_0 \in
H^{2}(\Omega)$ be the unique weak solution of the problem
\eqref{eq:4.equation_without D}--\eqref{eq:4.bc2_without_D}
satisfying \eqref{eq:normalization-w0}, with $\widehat M \in
L^2(\partial \Omega,\R^2)$ satisfying \eqref{eq:emme-supp1}--\eqref{eq:emme-frequency}.
There exists
$s>1$, only depending on $\gamma$, $M$, $\mu$, $M_0$ and
$\delta_0$, such that for every $\rho>0$ and every $\bar x\in
\Omega_{s\rho}$, we have

\begin{equation}
   \label{eq:LPS}
\int_{B_\rho(\bar x)}|\nabla^2 w_0|^2\geq
\frac{C}{\exp\left[A\left(\frac{\rho_0}{\rho}\right)^B\right]}
\int_{\Omega}|\nabla^2 w_0|^2,
\end{equation}
where $A>0$, $B>0$ and $C>0$ only depend on $h$, $M_{0}$, $M_1$,
$\gamma$, $\mu$, $M$, $\delta_0$
and $F$.
\end{prop}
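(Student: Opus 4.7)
The plan is to establish \eqref{eq:LPS} by the classical iteration of a three spheres inequality along chains of balls, combined with a global $L^{2}$ lower bound on $\nabla^{2}w_{0}$ that relies on the frequency condition \eqref{eq:emme-frequency}. The essential input on the operator is the three spheres inequality for $\nabla^{2}w_{0}$ proved in \cite{M-R-V5} (Theorem~6.2), which the dichotomy assumption \eqref{3.D(x)bound}--\eqref{3.D(x)bound 2} makes available: for concentric discs $B_{r_{1}}\subset B_{r_{2}}\subset B_{r_{3}}\subset\Omega$ with a fixed aspect ratio, one has
\[
\int_{B_{r_{2}}}|\nabla^{2}w_{0}|^{2}\leq C\left(\int_{B_{r_{1}}}|\nabla^{2}w_{0}|^{2}\right)^{\theta}\left(\int_{B_{r_{3}}}|\nabla^{2}w_{0}|^{2}\right)^{1-\theta},
\]
for some $\theta\in(0,1)$ and $C>0$ depending on $\gamma$, $M$ and $\mu$.

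The next step is a chain-of-balls argument. Given $\bar x\in\Omega_{s\rho}$ and an arbitrary $y\in\Omega_{s\rho}$, I would connect $\bar x$ to $y$ by a piecewise linear path in $\Omega_{s\rho}$ of length bounded by a constant times $\mathrm{diam}(\Omega)\leq C\rho_{0}$, and cover it by $N=O(\rho_{0}/\rho)$ overlapping discs whose enlargements of a fixed factor remain in $\Omega_{(s/2)\rho}$. Iterating the three spheres inequality along this chain, with radii kept proportional to $\rho$, and using a standard telescoping of the logarithms produces
\[
\int_{B_{\rho}(y)}|\nabla^{2}w_{0}|^{2}\leq C\left(\int_{B_{\rho}(\bar x)}|\nabla^{2}w_{0}|^{2}\right)^{\theta^{N}}\|\nabla^{2}w_{0}\|_{L^{2}(\Omega)}^{2(1-\theta^{N})}.
\]
Covering $\Omega_{s\rho}$ by $O((\rho_{0}/\rho)^{2})$ such discs $B_{\rho}(y_{i})$ and summing, one obtains an estimate of the form
\[
\int_{\Omega_{s\rho}}|\nabla^{2}w_{0}|^{2}\leq\exp\!\left[A\Bigl(\tfrac{\rho_{0}}{\rho}\Bigr)^{B}\right]\int_{B_{\rho}(\bar x)}|\nabla^{2}w_{0}|^{2},
\]
after absorbing the factor $\|\nabla^{2}w_{0}\|_{L^{2}(\Omega)}^{2(1-\theta^{N})}$ into the exponential via Young's inequality.

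The remaining, and hardest, step is to replace $\Omega_{s\rho}$ by $\Omega$ on the left hand side, i.e.\ to show that $\int_{\Omega\setminus\Omega_{s\rho}}|\nabla^{2}w_{0}|^{2}$ is controlled (up to a polynomial factor in $\rho_{0}/\rho$) by $\int_{\Omega_{s\rho}}|\nabla^{2}w_{0}|^{2}$. Here I would combine interior and boundary regularity for the fourth order Neumann problem \eqref{eq:4.equation_without D}--\eqref{eq:4.bc2_without_D} (which, via \eqref{eq:4.regul} and the $C^{2,1}$ smoothness of $\partial\Omega$, yields $\|w_{0}\|_{H^{2}(\Omega)}\leq C\|\widehat M\|_{H^{-1/2}(\partial\Omega)}$) with a stability bound in the reverse direction for the direct problem, namely $\int_{\Omega}|\nabla^{2}w_{0}|^{2}\geq c\|\widehat M\|_{H^{-1/2}(\partial\Omega)}^{2}$. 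The frequency condition \eqref{eq:emme-frequency} then trades $H^{-1/2}$ norms for $L^{2}$ norms of $\widehat M$, while the support restriction \eqref{eq:emme-supp1}--\eqref{eq:emme-supp2} localizes the trace and, together with the compatibility condition \eqref{eq:emme-comp-cond} and the normalization \eqref{eq:normalization-w0}, rules out concentration phenomena near $\partial\Omega$. The main obstacle I anticipate is precisely this quantitative comparison between the full norm and the interior norm: the anisotropic plate operator and the mere dichotomy assumption prevent a direct use of classical Poincaré or Caccioppoli inequalities, and one must argue via a boundary three spheres (or doubling) inequality, in the spirit of \cite{M-R-V5}, to prevent $w_{0}$ from vanishing to infinite order on a boundary strip of thickness $s\rho$. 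Once that comparison is in hand, the two estimates combine to give \eqref{eq:LPS} with constants $A$, $B$, $C$ depending only on the quantities listed in the statement.
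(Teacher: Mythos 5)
Your overall strategy (three spheres inequality, chain of balls, then a frequency-type argument to pass from $\Omega_{s\rho}$ to $\Omega$) is the right family of ideas, and you correctly identify the two principal ingredients. But there are two concrete problems, the first of which is fatal as written.

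The constant-scale chain does not yield the stated exponent. You cover a path from $\bar x$ to $y$ by $N=O(\rho_0/\rho)$ discs of radius comparable to $\rho$ and iterate three spheres, obtaining $\int_{B_\rho(y)}|\nabla^2 w_0|^2 \leq C\big(\int_{B_\rho(\bar x)}|\nabla^2 w_0|^2\big)^{\theta^N}\big(\int_\Omega|\nabla^2 w_0|^2\big)^{1-\theta^N}$. Inverting that to bound $\int_{B_\rho(\bar x)}$ from below forces you to raise a ratio $\leq 1$ to the power $\theta^{-N}$, and $\theta^{-N}=\exp(N|\log\theta|)\sim\exp(c\rho_0/\rho)$; the resulting lower bound on $\int_{B_\rho(\bar x)}/\int_\Omega$ is therefore of size $\exp\!\big[-\exp(c\rho_0/\rho)\cdot\log(\cdot)\big]$, a \emph{double} exponential in $\rho_0/\rho$, not the claimed $\exp\!\big[-A(\rho_0/\rho)^B\big]$. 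Your second displayed inequality does not follow from your first; there is no Young-type "absorption" that repairs a $\theta^{-N}$ with $N\sim\rho_0/\rho$. The paper avoids this by a geometric construction (Steps 1--2, taken from \cite{l:mr04}, Proof of Proposition 3.1): at each point $x$ one finds a cone $C(\tilde x,\xi,\vartheta_0)\subset\Omega$ in which a chain of discs of \emph{geometrically growing} radii $r_k=\chi^{k-1}\rho$, $\chi>1$, fits, so that in only $k(\rho)=O(\log(\rho_0/\rho))$ three-spheres steps one reaches a disc of fixed macroscopic radius $r_{k(\rho)}\in[\tau/\chi,h_0/20]$. Only at that macroscopic scale does one run a constant-scale chain, whose length $L$ is bounded by $M_1/(\pi r_{k(\rho)}^2)=O(1)$. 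The total exponent is then $\delta_\chi^{k(\rho)-1}\delta^{O(1)}\sim\rho^{c}$ (a power of $\rho$, not an exponential), and inverting yields precisely $\exp\!\big[-A(\rho_0/\rho)^B\big]$. This growing-radii step is the crux of "Lipschitz propagation of smallness" and is missing from your argument.

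The boundary-strip step is also not what you expect. You anticipate needing a boundary three-spheres or doubling inequality "in the spirit of \cite{M-R-V5}" to rule out infinite-order vanishing near $\partial\Omega$. The paper instead uses a much more elementary and quantitative Lemma (Lemma \ref{lem:Frequency}): writing $\int_{\Omega_r}/\int_\Omega = 1 - \int_{\Omega\setminus\Omega_r}/\int_\Omega$, one bounds $\int_{\Omega\setminus\Omega_r}|\nabla^2 w_0|^2$ via H\"older, the Sobolev embedding $H^{1/2}\hookrightarrow L^4$, the regularity estimate $\|w_0\|_{H^{5/2}(\Omega)}\leq C\rho_0^2\|\widehat M\|_{L^2}$ (interpolated from $H^2$/$H^3$ bounds, \eqref{eq:stab-0} and \eqref{eq:regularity-H4}), and $|\Omega\setminus\Omega_r|\leq Cr$, then uses Lemma \ref{lem:6.1} ($\|\widehat M\|_{H^{-1/2}}\leq C\|\nabla^2 w_0\|_{L^2(\Omega)}$) together with the frequency bound \eqref{eq:emme-frequency} to convert $\|\widehat M\|_{L^2}$ back into $\|\nabla^2 w_0\|_{L^2(\Omega)}$. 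The outcome is a \emph{fixed} threshold $\widetilde\rho$ (independent of $\rho$) such that $\int_{\Omega_r}\geq\frac12\int_\Omega$ for all $r\leq\widetilde\rho$; one then restricts to $\rho\leq\widetilde\rho/(s+1)$ and handles $\rho>\rho^*$ trivially since $\Omega_{s\rho}\neq\emptyset$ bounds $\rho$ from above. No boundary unique continuation or Carleman-type boundary estimate is required or used.

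In summary: your proposal identifies the right ingredients but, because of the constant-scale chain, would prove only a double-exponential bound. To fix it you must incorporate the cone/growing-radii construction, and you can considerably simplify your plan for the boundary layer by following the paper's elementary frequency argument.
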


\begin{proof}[Proof of Theorem \ref{theo:4.main}]
By the hypotheses made on ${\mathbb P}$, the inequality
\eqref{eq:7.13} is satisfied with $\xi_{0}=\gamma\frac{h^3}{12}$,
$\xi_{1}=\frac{h^3}{6}M$, so that Lemma \ref{lem:7.2} can be
applied.

By standard interior regularity estimates (see, for instance,
Theorem 8.3 in \cite{M-R-V1}) and by the Sobolev embedding theorem,
we have
\begin{equation}
  \label{eq:7.24}
  \| {\nabla}^2 w_{0}\|_{{L}^{\infty}(D)}
  \leq
  \frac{C}{\rho_0^2}\|w_{0}\|_{{H}^{2}(\Omega)},
\end{equation}
with $C$ only depending on  $\gamma$, $h$, $M$ and $d_0$.

{}From \eqref{eq:7.24}, Poincar\'{e} inequality, \eqref{eq:7.13},
\eqref{eq:energy-without-D}, we have
\begin{equation}
  \label{eq:7.24bis}
  \| {\nabla}^2 w_{0}\|_{{L}^{\infty}(D)}
  \leq
  \frac{C}{\rho_0} W_0^{\frac {1}{2}},
\end{equation}
where the constant $C$ only depends on $\gamma$, $h$, $M$, $d_{0}$,
$M_0$ and $M_1$.

The lower bound for $|D|$ in \eqref{eq:4.size_estimate+},
\eqref{eq:4.size_estimate-} follows {from} the right hand side of
\eqref{eq:7.14+}, \eqref{eq:7.14-} and {from} \eqref{eq:7.24bis}.

Next, let us prove the upper bound for $|D|$ in
\eqref{eq:4.size_estimate+}, \eqref{eq:4.size_estimate-}.

Let $\epsilon=\min \{ \frac {2 d_{0}}{s}, \frac {h_{1}}{\sqrt {2}}
\}$, where $s$ is as in Proposition \ref{prop:6.1}. Let us cover
$D_{h_{1}\rho_0}$ with internally non overlapping closed squares
$Q_{l}$ of side $\epsilon\rho_0$, for $l=1,...,L$. By the choice
of $\epsilon$ the squares $Q_{l}$ are contained in $D$. Let $\bar
{l}$ be such that $\int_{Q_{\bar {l}}} {| {\nabla}^2
w_{0}|}^{2}=\min_{l}\int_{Q_{l}} {| {\nabla}^2 w_{0}|}^{2}$.
Noticing that $|D_{h_{1}\rho_0}|\leq L\epsilon^2\rho_0^2$, we have
\begin{equation}
  \label{eq:7.25}
  \int_{D} {| {\nabla}^2 w_{0}|}^{2}
  \geq
  \int_{\bigcup_{l=1}^{L} Q_{l}} {| {\nabla}^2 w_{0}|}^{2}
  \geq
  L\int_{Q_{\bar {l}}} {| {\nabla}^2 w_{0}|}^{2}
  \geq
  \frac {|D_{h_{1}\rho_0}|}{\rho_0^2\epsilon ^{2}}
  \int_{Q_{\bar {l}}} {| {\nabla}^2 w_{0}|}^{2}.
\end{equation}
Let $\bar {x}$ be the center of $Q_{\bar {l}}$. {From}
\eqref{eq:7.13}, \eqref{eq:7.25}, estimate \eqref{eq:LPS} with
$\rho=\frac{\epsilon}{2}\rho_0$, {from}
\eqref{eq:energy-without-D} and by our hypothesis \eqref{eq:4.fat}
we have
\begin{equation}
  \label{eq:7.26}
  \int_{D} {| {\nabla}^2 w_{0}|}^{2}
  \geq
  \frac{K |D|}{\rho_0^2} W_0,
\end{equation}
where $K$ is a positive constant only depending on $\gamma$, $h$,
$M$, $M_0$, $M_1$, $d_{0}$, $\delta_0$, $h_{1}$ and $F$. The upper bound for $|D|$ in
\eqref{eq:4.size_estimate+}, \eqref{eq:4.size_estimate-} follows
{from} the left hand side of \eqref{eq:7.14+},\eqref{eq:7.14-} and
{from} \eqref{eq:7.26}.
\end{proof}

\section{Proof of Proposition \ref{prop:6.1}} \label{sec:
LPS}

Let us premise the following Lemmas.
\begin{prop} [Three Spheres Inequality]
\label{prop:3spheres} Let $\Omega$ be a domain in $\R^2$, and let
the plate tensor $\mathbb{P}$ given by \eqref{eq:PandC} satisfies
\eqref{eq:sym-conditions-C-components}, \eqref{eq:3.convex},
\eqref{eq:4.regul} and the dichotomy condition
\eqref{3.D(x)bound}--\eqref{3.D(x)bound 2}. Let $u\in H^2(\Omega)$
be a weak solution to the equation
\begin{equation}
  \label{eq:plate_eq}
{\rm div}({\rm div} (
      {\mathbb P}\nabla^2 u))=0,
       \quad \hbox{in }\Omega.
\end{equation}
For every $r_1, r_2, r_3, \overline{r}$, $0<r_1<r_2<r_3\leq
\overline{r}$, and for every $x\in \Omega_{\overline{r}}$ we have
\begin{equation}
  \label{eq:3sph}
   \int_{B_{r_{2}}(x)}|\nabla ^2 u|^{2} \leq C
   \left(  \int_{B_{r_{1}}(x)}|\nabla ^2 u|^{2}
   \right)^{\delta}\left(  \int_{B_{r_{3}}(x)}|\nabla ^2 u|^{2}
   \right)
   ^{1-\delta},
\end{equation}
where $C>0$ and $\delta$, $0<\delta<1$, only depend on $\gamma$,
$M$, $\mu$, $\frac{r_{3}}{r_{2}}$ and $\frac{r_{3}}{r_{1}}$.
\end{prop}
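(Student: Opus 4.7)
The plan is to follow the strategy of \cite{M-R-V5}, whose Theorem 6.2 is precisely this three spheres inequality: the dichotomy condition is used to factor the principal symbol of the plate operator, reducing the problem to a Carleman estimate for second-order elliptic operators. Under \eqref{3.D(x)bound}--\eqref{3.D(x)bound 2}, the characteristic polynomial of the symbol of $\divrg(\divrg(\mathbb{P}\nabla^2\,\cdot\,))$ (a quartic whose coefficients are algebraic in $A_0,\ldots,F_0$, with discriminant-type quantity essentially $\mathcal{D}$) splits either into two distinct complex-conjugate quadratic factors or into a squared quadratic factor; in either case each factor is the symbol of a uniformly elliptic second-order operator, and the regularity hypothesis \eqref{eq:4.regul} ensures the factor coefficients are $C^{1,1}$. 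This yields an operator identity
\begin{equation*}
\divrg(\divrg(\mathbb{P}\nabla^2 u)) = L_1(L_2 u) + R u,
\end{equation*}
with $L_1,L_2$ second-order uniformly elliptic with $C^{1,1}$ coefficients (ellipticity constants depending only on $\gamma$, $M$, $\mu$) and $R$ of order at most three with bounded coefficients.

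Setting $v := L_2 u$, one obtains $L_1 v = -R u$. I would then apply a H\"ormander-type Carleman estimate for $L_1$ with a radial, strongly pseudoconvex weight $e^{\tau\phi(|y-x|)}$ and a cutoff $\chi$ supported in $B_{r_3}(x)$ with $\chi\equiv 1$ on $B_{r_2}(x)$. Standard optimization in the large parameter $\tau$ converts the resulting frequency-function inequality into the log-convexity bound
\begin{equation*}
\int_{B_{r_2}(x)}|v|^2 \leq C\left(\int_{B_{r_1}(x)}|v|^2\right)^{\delta}\left(\int_{B_{r_3}(x)}|v|^2\right)^{1-\delta},
\end{equation*}
for some $\delta\in(0,1)$ depending on $\gamma$, $M$, $\mu$ and the ratios of the radii. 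Local elliptic regularity for $L_2$, combined with $H^3$ control on $u$ obtained directly from the fourth-order equation on slightly enlarged concentric balls, then replaces $v$ by $\nabla^2 u$ and gives \eqref{eq:3sph}.

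The main obstacle is the absorption of the remainder $Ru$ into the Carleman estimate: since $R$ has order three while $L_1$ is only of order two, the customary device of taking $\tau$ very large does not by itself dominate $Ru$. The resolution, developed in \cite{M-R-V5}, is to couple the Carleman estimate for $L_1$ with a parallel Carleman estimate for $L_2$, treating the pair as a single second-order system, so that the worst third-order contributions in $Ru$ are controlled by the gradient terms on the left-hand side of the combined inequality. The final transfer from a three spheres inequality for $v=L_2 u$ to one for $\nabla^2 u$ is comparatively routine given the uniform ellipticity of $L_2$.
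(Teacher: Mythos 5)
Your proposal is correct and takes essentially the same approach as the paper: the paper simply invokes Theorem 6.5 of \cite{M-R-V5}, and your outline accurately reconstructs the main ingredients of that reference (factorization of the plate operator under the dichotomy condition into $L_1L_2 + R$ with $L_1,L_2$ second-order uniformly elliptic and $R$ of order three, Carleman estimates for the coupled second-order pair to absorb $R$, and local elliptic regularity to pass from $L_2u$ to $\nabla^2 u$).
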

A proof of the above proposition can be easily obtained by Theorem
6.5 in \cite{M-R-V5}.

In order to prove Proposition \ref{prop:6.1}, we need the estimate
stated in the following Lemma (for the proof see \cite{M-R-V1}, Lemma 7.1).
\begin{lem}
\label{lem:6.1} Let $\Omega$ be a bounded domain in $\R^{2}$, such
that $\partial \Omega$ is of class ${C}^{2,1}$ with constants
$\rho_{0}, M_{0}$. Let the fourth order tensor $\mathbb{P}$ be
defined by \eqref{eq:PandC} and satisfying
\eqref{eq:sym-conditions-C-components}, \eqref{eq:3.convex} and
\eqref{eq:4.regul}. Let $w_0 \in H^{2}(\Omega)$ be the unique weak
solution of the problem \eqref{eq:4.equation_without
D}--\eqref{eq:4.bc2_without_D} satisfying
\eqref{eq:normalization-w0}, with $\widehat M \in H^{-
\frac{1}{2}}(\partial \Omega,\R^2)$ satisfying
\eqref{eq:emme-supp1}--\eqref{eq:emme-comp-cond}. We have
\begin{equation}
    \label{6.M_hat_above}
\|\widehat M\|_{H^{-\frac{1}{2}}(\partial \Omega,\R^2)}\leq
C\|\nabla ^2w_0\|_{L^2(\Omega)},
\end{equation}
where $C$ is a positive constant only depending on $M_0$, $M_1$,
$\delta_0$ and $M$.
\end{lem}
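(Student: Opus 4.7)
The plan is to estimate $\|\widehat M\|_{H^{-1/2}(\partial\Omega,\R^2)}$ by duality: for each admissible $\phi\in H^{1/2}(\partial\Omega,\R^2)$, I express the pairing $\langle \widehat M,\phi\rangle$ as an interior integral via the weak formulation of the Neumann problem solved by $w_0$, and then bound that interior integral by Cauchy--Schwarz using the $L^2$ norm of $\nabla^2 w_0$.

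First I would record the weak formulation: for every $v\in H^2(\Omega)$,
\[
\int_\Omega \mathbb{P}\nabla^2 w_0\cdot\nabla^2 v = -\int_{\partial\Omega}\bigl(\widehat M_{\tau,s}\,v+\widehat M_n\, v_{,n}\bigr).
\]
Since $\partial\Omega$ is a closed curve, an arclength integration by parts rewrites the right-hand side as the cleaner duality pairing
\[
\int_{\partial\Omega}\bigl(\widehat M_\tau\, v_{,s}-\widehat M_n\, v_{,n}\bigr),
\]
between $\widehat M$ (in the $(\tau,n)$ basis, equivalent to the Cartesian representation) and the boundary datum $(v_{,s},-v_{,n})\in H^{1/2}(\partial\Omega,\R^2)$. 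This is a legitimate $H^{-1/2}$--$H^{1/2}$ pairing precisely because the $C^{2,1}$ regularity of $\partial\Omega$ ensures $v|_{\partial\Omega}\in H^{3/2}(\partial\Omega)$ and hence $v_{,s}\in H^{1/2}(\partial\Omega)$.

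The core technical step is a quantitative lifting lemma: for every $\phi=(\phi_1,\phi_2)\in H^{1/2}(\partial\Omega,\R^2)$ with $\int_{\partial\Omega}\phi_1=0$, there exists $v\in H^2(\Omega)$ with $v_{,s}=\phi_1$ and $-v_{,n}=\phi_2$ on $\partial\Omega$, satisfying $\|v\|_{H^2(\Omega)}\leq C\|\phi\|_{H^{1/2}(\partial\Omega,\R^2)}$, with $C$ depending only on $M_0$ and $M_1$. One recovers $v|_{\partial\Omega}\in H^{3/2}(\partial\Omega)$ by arclength primitive of $\phi_1$ (made possible by the mean-zero hypothesis), after which the standard $H^2$-extension of a pair $(v|_{\partial\Omega},v_{,n}|_{\partial\Omega})\in H^{3/2}\times H^{1/2}$ on $C^{2,1}$ domains produces $v$. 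To eliminate the mean-zero constraint on $\phi_1$ I would exploit the compatibility \eqref{eq:emme-comp-cond} together with the strict support condition \eqref{eq:emme-supp2}: since $\widehat M$ is supported in $\Gamma$ with $|\partial\Omega\setminus\Gamma|\geq \delta_0|\partial\Omega|$, one may modify $\phi_1$ outside $\Gamma$ by a fixed $H^{1/2}$ cutoff to zero out its mean without altering $\langle\widehat M,\phi\rangle$, and the cost of this correction is controlled uniformly by $\delta_0$.

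Combining the two steps, for any admissible $\phi$,
\[
|\langle\widehat M,\phi\rangle|=\Bigl|\int_\Omega \mathbb{P}\nabla^2 w_0\cdot\nabla^2 v\Bigr|\leq \tfrac{h^3}{12}M\|\nabla^2 w_0\|_{L^2(\Omega)}\|\nabla^2 v\|_{L^2(\Omega)}\leq C\|\nabla^2 w_0\|_{L^2(\Omega)}\|\phi\|_{H^{1/2}(\partial\Omega,\R^2)},
\]
using \eqref{eq:4.regul} and the lifting bound; taking the supremum over admissible $\phi$ yields \eqref{6.M_hat_above}. The principal obstacle is the lifting step, since it demands uniform constants in the a priori geometric data and forces one to juggle $C^{2,1}$ trace theory, the boundary primitive-reconstruction of $v|_{\partial\Omega}$ from $\phi_1$, and a fixed partition-of-unity adapted to $\Gamma$ that absorbs the mean-zero compatibility.
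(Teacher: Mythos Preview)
The paper does not prove this lemma here; it simply refers the reader to \cite{M-R-V1}, Lemma~7.1, for the argument. Your duality-and-lifting approach is the natural route to such a trace estimate and is essentially correct: the weak formulation turns the boundary pairing $\langle\widehat M,\phi\rangle$ into the interior integral $\int_\Omega\mathbb{P}\nabla^2 w_0\cdot\nabla^2 v$, which Cauchy--Schwarz and \eqref{eq:4.regul} control by $\|\nabla^2 w_0\|_{L^2(\Omega)}\|\nabla^2 v\|_{L^2(\Omega)}$; the quantitative surjectivity of $v\mapsto(v_{,s},-v_{,n})$ onto $H^{1/2}(\partial\Omega,\R^2)$, with the mean-zero obstruction on the tangential component removed by modifying $\phi$ on $\partial\Omega\setminus\Gamma$ (this is where \eqref{eq:emme-supp1}--\eqref{eq:emme-supp2} enter and where the $\delta_0$-dependence of $C$ arises), then closes the argument. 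This is very likely the same scheme carried out in \cite{M-R-V1}; in any case there is no substantive gap in your sketch, only the routine verifications you already flag (uniform constants in the $H^{3/2}\times H^{1/2}$ trace lifting on $C^{2,1}$ domains, and the $H^{1/2}$-boundedness of the change of frame between Cartesian and $(\tau,n)$ components).
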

\begin{lem}
  \label{lem:Frequency}
Let the hypotheses of Proposition \ref{prop:6.1} be satisfied.
There exists $\widetilde{\rho}>0$, only depending on $M_0$, $M_1$,
$\delta_0$, $\gamma$, $M$, $\mu$ and $F$, such that for every
$r\leq\widetilde{\rho}$ we have
\begin{equation}
  \label{eq:Frequency}
   \frac{\int_{\Omega_r}|\nabla^2 w_0|^2}{\int_{\Omega}|\nabla^2 w_0|^2}\geq \frac{1}{2}.
\end{equation}
\end{lem}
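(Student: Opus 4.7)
The claim is equivalent to showing $\int_{\Omega\setminus\Omega_r}|\nabla^2 w_0|^2 \leq \frac{1}{2}\int_\Omega|\nabla^2 w_0|^2$ for every $r \leq \widetilde{\rho}$; in other words, $\nabla^2 w_0$ must not concentrate in the thin boundary strip $\Omega\setminus\Omega_r$, whose area is at most $C(M_0,M_1)\,r\rho_0$. My plan is to upgrade $\nabla^2 w_0$ from $L^2$ to $L^4$ integrability via global elliptic regularity, and then apply H\"older's inequality on that strip. The frequency-type assumption \eqref{eq:emme-frequency} is what enables the key step: it will allow $\|\widehat M\|_{L^2}$ (which enters the regularity estimate) to be controlled by $\|\nabla^2 w_0\|_{L^2}$ (which normalizes the final conclusion).

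First, combining \eqref{eq:emme-frequency} with Lemma \ref{lem:6.1} I would obtain
\[
\|\widehat M\|_{L^2(\partial\Omega,\R^2)} \leq F\,\|\widehat M\|_{H^{-\frac{1}{2}}(\partial\Omega,\R^2)} \leq CF\,\|\nabla^2 w_0\|_{L^2(\Omega)}.
\]
Next, I would invoke a global elliptic regularity estimate for the fourth order Neumann problem \eqref{eq:4.equation_without D}--\eqref{eq:4.bc2_without_D}. Under the $C^{1,1}$ regularity \eqref{eq:4.regul} of the coefficients and the $C^{2,1}$ regularity of $\partial\Omega$, one expects to gain $1/2$ derivative of smoothness beyond the $H^2$ energy class when $\widehat M \in L^2(\partial\Omega,\R^2)$, obtaining $w_0\in H^{\frac{5}{2}}(\Omega)$ with $\|w_0\|_{H^{\frac{5}{2}}(\Omega)}\leq C\|\widehat M\|_{L^2(\partial\Omega,\R^2)}$. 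Hence $\nabla^2 w_0 \in H^{\frac{1}{2}}(\Omega)$, and the two-dimensional Sobolev embedding $H^{\frac{1}{2}}(\Omega)\hookrightarrow L^4(\Omega)$ combined with the previous inequality will yield, after tracking dimensional scaling,
\[
\|\nabla^2 w_0\|_{L^4(\Omega)}^2 \leq \frac{C' F^2}{\rho_0}\,\|\nabla^2 w_0\|_{L^2(\Omega)}^2.
\]

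Finally, I would apply H\"older's inequality together with the elementary area bound $|\Omega\setminus\Omega_r| \leq C(M_0,M_1)\,r\rho_0$ to get
\[
\int_{\Omega\setminus\Omega_r}|\nabla^2 w_0|^2 \leq |\Omega\setminus\Omega_r|^{\frac{1}{2}}\,\|\nabla^2 w_0\|_{L^4(\Omega)}^2 \leq C'' F^2 \sqrt{r/\rho_0}\,\|\nabla^2 w_0\|_{L^2(\Omega)}^2.
\]
The conclusion then follows by choosing $\widetilde{\rho}$ so that $C'' F^2 \sqrt{\widetilde{\rho}/\rho_0} \leq 1/2$, producing a threshold $\widetilde{\rho}$ depending only on the listed a priori constants.

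The main technical obstacle is the quantitative global $H^{\frac{5}{2}}$ regularity estimate for the anisotropic plate Neumann problem with $L^2$ boundary couples, with constants controlled only by $M_0$, $M_1$, $\delta_0$, $\gamma$, $M$ and $\mu$. This is standard in principle---boundary flattening together with local shift theorems for fourth order elliptic operators with $C^{1,1}$ coefficients---but its careful implementation in the present anisotropic setting requires some attention. An alternative route avoiding the full $H^{\frac{5}{2}}$ shift would be to derive a direct $L^p$ bound on $\nabla^2 w_0$ for some $p>2$ by a Meyers-type or duality argument starting from the $H^2$ energy estimate; such a weaker bound would already suffice for the same conclusion.
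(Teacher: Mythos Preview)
Your proposal is correct and follows essentially the same route as the paper: rewrite the claim as a bound on $\int_{\Omega\setminus\Omega_r}|\nabla^2 w_0|^2$, apply H\"older on the boundary strip together with the area bound $|\Omega\setminus\Omega_r|\leq Cr\rho_0$, and control $\|\nabla^2 w_0\|_{L^4}$ by the Sobolev embedding $H^{1/2}\hookrightarrow L^4$ and an $H^{5/2}$ regularity estimate for $w_0$, closing the argument via Lemma~\ref{lem:6.1} and \eqref{eq:emme-frequency}. The one point worth noting is how the paper obtains the $H^{5/2}$ bound: rather than proving it directly, it interpolates between the $H^2$ energy estimate $\|w_0\|_{H^2}\leq C\rho_0^2\|\widehat M\|_{H^{-1/2}}$ and the $H^3$ estimate $\|w_0\|_{H^3}\leq C\rho_0^2\|\widehat M\|_{H^{1/2}}$ (the latter quoted from \cite{M-R-V4}), which immediately yields $\|w_0\|_{H^{5/2}}\leq C\rho_0^2\|\widehat M\|_{L^2}$ and sidesteps the technical obstacle you flagged.
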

\begin{proof}
Let us set
\begin{equation}
  \label{eq:riscrittura}
   \frac{\int_{\Omega_r}|\nabla^2 w_0|^2}{\int_{\Omega}|\nabla^2 w_0|^2}
   = 1-\frac{\int_{\Omega\setminus\Omega_r}|\nabla^2 w_0|^2}{\int_{\Omega}|\nabla^2 w_0|^2}
   .
\end{equation}
By H\"{o}lder inequality
\begin{equation}
  \label{eq:holder}
   \|\nabla^2 w_0\|^2_{L^2\left({\Omega\setminus\Omega_r}\right)}\leq
   \left|{\Omega\setminus\Omega_r}\right|^{\frac{1}{2}}
   \|\nabla^2 w_0\|^2_{L^4\left({\Omega\setminus\Omega_r}\right)},
\end{equation}
and by Sobolev inequality \cite{l:ad}
\begin{equation}
  \label{eq:sobolev}
   \|\nabla^2 w_0\|^2_{L^4\left(\Omega\right)}\leq C \|\nabla^2 w_0\|^2_{H^{\frac{1}{2}}(\Omega)},
\end{equation}
we have
\begin{equation}
  \label{eq:hol-sob}
   \|\nabla^2 w_0\|^2_{L^2({\Omega\setminus\Omega_r})}\leq
   \frac{C}{\rho_0^4} \left|{\Omega\setminus\Omega_r}\right|^{\frac{1}{2}}
   \| w_0\|^2_{H^{\frac{5}{2}}(\Omega)},
\end{equation}
where $C$ only depends on $M_0$, $M_1$. We recall that, by the
variational formulation of the problem
\eqref{eq:4.equation_without D}--\eqref{eq:4.bc2_without_D}, the
function $w_0$ satisfies
\begin{equation}
  \label{eq:stab-0}
    \|w_0\|_{H^2(\Omega)}
    \leq C\rho_0^2
    \|\widehat M\|_{H^{- \frac{1}{2}}(\partial \Omega, \R^2)},
\end{equation}
where $C>0$ only depends on $h$, $M_0$, $M_1$ and $\gamma$. Now, by
using the following regularity estimate (see \cite{M-R-V4} for a
proof)
\begin{equation}
  \label{eq:regularity-H4}
  \|w_0\|_{H^3(\Omega)}\leq C\rho_0^2\|\widehat M\|_{H^{\frac{1}{2}}(\partial\Omega, \R^2)},
\end{equation}
where $C>0$ only depends on $h$, $M_0$, $M_1$, $\gamma$ and $M$.
By interpolating \eqref{eq:stab-0} and
\eqref{eq:regularity-H4}, we get
\begin{equation}
  \label{eq:5/2_2}
   \| w_0\|_{H^{\frac{5}{2}}(\Omega)}\leq C\rho_0^2\|\widehat{M}\|_{L^2(\partial\Omega,\R^2)},
\end{equation}
where $C$ only depends on $h$, $M_0$, $M_1$, $\gamma$ and $M$.

Moreover
\begin{equation}
  \label{eq:AR}
   \left|\Omega\setminus\Omega_r\right|\leq Cr,
\end{equation}
with $C$ only depending on $M_0$ and $M_1$, see for details (A.3) in
\cite{l:ar}. {}From \eqref{eq:hol-sob},
\eqref{eq:5/2_2} and \eqref{eq:AR} we have
\begin{equation}
  \label{eq:stima_num}
   \int_{\Omega\setminus\Omega_r}|\nabla^2 w_0|^2\leq
   C\rho_0^2r^{\frac{1}{2}}\|\widehat{M}\|_{L^2(\partial\Omega,\R^2)},
\end{equation}
where $C$ only depends on $M_0$, $M_1$, $\gamma$, $M$. Finally, by
\eqref{eq:riscrittura}, \eqref{eq:stima_num}
and \eqref{6.M_hat_above} we obtain \eqref{eq:Frequency}.
\end{proof}

\begin{proof} [Proof of Proposition \ref{prop:6.1}]
It is not restrictive to assume $\rho_0=1$.

Set
\begin{equation*}
\vartheta_0=\arctan\frac{1}{M_0},
\end{equation*}
\begin{equation*}
s=\frac{5+\sin\vartheta_0+\sqrt{\sin^2\vartheta_0
+30\sin\vartheta_0+25}}{2\sin\vartheta_0},
\end{equation*}
\begin{equation*}
\chi=\frac{s\sin\vartheta_0}{5}=
\frac{5+\sin\vartheta_0+\sqrt{\sin^2\vartheta_0
+30\sin\vartheta_0+25}}{10},
\end{equation*}
\begin{equation*}
\vartheta_1=\arcsin\frac{1}{s}.
\end{equation*}
Let us notice that $s>1$, $\chi>1$ and $\vartheta_1>0$ only depend
on $M_0$.

Given $z\in{\R}^2$,
$\xi\in{\R}^2$, $|\xi|=1$, $\vartheta>0$, we shall denote by
\begin{equation}
\label{eq:cono}
     C(z,\xi,\vartheta)=\{x\in{\R}^2 \hbox{ s. t. }
      \frac{(x-z)\cdot\xi}{|x-z|}>\cos\vartheta\},
\end{equation}
the open cone having vertex $z$, axis in the direction $\xi$ and
width $2\vartheta$.

\medskip
\noindent {\bf Step 1} \textit{For every $\rho$, $0<\rho\leq
\rho_1=\frac{1}{16s}$,  and for every $x\in\Omega$ satisfying
$s\rho<\hbox{dist}(x,\partial\Omega)\leq\frac{1}{4}$, there exists
$\tilde x\in\Omega$ such that
\item{i)} $B_{5\chi\rho}(x)\subset C(\tilde
x,\frac{x-\tilde x}{|x-\tilde x|},\vartheta_0)\cap
B_{\frac{{1}}{8}}(\tilde x)\subset\Omega$,
\item{ii)} the discs $B_{\rho}(x)$ and $B_{\chi\rho}(x_2)$ are
internally tangent to $C(\tilde x,\frac{x-\tilde x}{|x-\tilde
x|},\vartheta_1)$, where $x_2=x+(\chi+1)\rho\frac{x-\tilde
x}{|x-\tilde x|}$.}

The proof of this step has merely geometrical character and has been
given in \cite{l:mr04}, Proof of Proposition 3.1. Up
to a rigid motion, we may assume that
$\frac{x-\widetilde{x}}{\left|x-\widetilde{x}\right|}=e_2$, where
$(e_1,e_2)$ is the canonical basis of $\mathbb{R}^2$

Set
\begin{equation*}
\begin{array}{lll} r_1=\rho,\phantom{aaaaa}
&r_k=\chi r_{k-1}=\chi^{k-1}\rho, &k\geq 2,\\
x_1=x,\phantom{aaaaa} &x_k=x_{k-1}+(r_{k-1}+r_k)e_2, &k\geq 2.\\
\end{array}
\end{equation*}

For every $k\in\N$, $B_{r_{k}}(x_k)$ is internally tangent to the
cone $C(\tilde x,e_2,\vartheta_1)$ and $B_{5\chi r_k}(x_k)$ is
internally tangent to the cone $C(\tilde x,e_2,\vartheta_0)$.
Moreover, we have that $B_{5 r_{k}}(x_k)\subset
B_{\frac{{1}}{8}}(\tilde x)$ if and only if
\begin{equation}
   \label{eq:upper_k-1}
   k-1\leq\frac{\log\left\{\frac{\chi-1}
   {6\chi-4}
   \left(\frac{{1}}{8\rho}-s+1+\frac{2}{\chi-1}
   \right)\right\}}{\log\chi}.
\end{equation}
In order to ensure that $B_{5 r_k}(x_k)\subset
B_{\frac{{1}}{8}}(\tilde x)$ holds at least for $k=1,2$, let us
assume also that $\rho\leq\rho_2=\frac{1} {8(6\chi+s+1)}$. Let
us define
\begin{equation}
   \label{eq:def_k_of_rho}
   k(\rho)=\left[\frac{\log\left\{\frac{\chi-1}
   {6\chi-4}\left(
   \frac{{h_0}}{8\rho}-s+1+\frac{2}{\chi-1}
   \right)\right\}}{\log\chi}\right]+1,
\end{equation}
where $h_0$, $0<h_0<1$, only depending on $M_0$, is
such that $\Omega_h$ is connected for every $h<h_0$ (see Prop. 5.5
in \cite{A-R-R-V}) and $[\ \cdot\ ]$ denotes the integer part of a real
number. We have that $B_{5r_{k(\rho)}}(x_{k(\rho)})\subset
B_{\frac{1}{8}}(\tilde x)\cap\Omega$ and $B_{5\chi r_j}(x_j)\subset
B_{\frac{1}{8}}(\tilde x)\cap\Omega$ for every $j=1,...,k(\rho)-1$.

Moreover let $\rho\leq\rho_3=\frac{h_0}{16s}$. We have
\begin{equation}
   \label{eq:lower_k}
   k(\rho)\geq\frac{\log\frac{\tau}{\rho}}{\log\chi},
\end{equation}
with $\tau=\frac{(\chi-1)h_0} {16\left(6\chi-4\right)}$.
Assuming also that $\rho\leq\rho_4=\frac{(\chi-1)h_0}{16}$, and
noticing that $\frac{\chi-1} {6\chi-4}\leq \frac{1}{5}$, we have
\begin{equation}
   \label{eq:upper_k}
   k(\rho)\leq\frac{\log\frac{h_0}{20\rho}}{\log\chi}+1.
\end{equation}
{}From \eqref{eq:lower_k} and \eqref{eq:upper_k}, it follows that,
for $\rho\leq\bar\rho=\min\{\rho_1,\rho_2,\rho_3, \rho_4\}$,
\begin{equation}
   \label{eq:upper_lower_rho_k}
   \frac{\tau}{\chi}\leq r_{k(\rho)}=
   \chi^{k(\rho)-1}\rho\leq\frac{h_0}{20}.
\end{equation}

\medskip
\noindent {\bf Step 2} \textit{There exists $\overline{\rho}>0$,
only depending on $\gamma$, $M$, $\mu$ and $M_0$, such that for
every $\rho$, $0<\rho\leq\overline{\rho}$, and for every
$x\in\Omega$ such that $s\rho<\hbox{dist}(x,\partial\Omega)\leq\frac{1}{4}$,
\begin{equation}
   \label{eq:step2_a}
   \frac{\int_{B_{r_{k(\rho)}}(x_{k(\rho)})}|\nabla^2 w_0|^2}
   {\int_\Omega|\nabla^2 w_0|^2}\leq C
   \left(\frac{\int_{B_{\rho}(x)}|\nabla^2 w_0|^2}
   {\int_\Omega|\nabla^2 w_0|^2}\right)
   ^{\delta_\chi^{k(\rho)-1}},
\end{equation}
\begin{equation}
   \label{eq:step2_b}
   \frac{\int_{B_{\rho}(x)}|\nabla^2 w_0|^2}
   {\int_\Omega|\nabla^2 w_0|^2}\leq C\left(\frac{\int_{B_{r_{k(\rho)}}(x_{k(\rho)})}|\nabla^2 w_0|^2}
   {\int_\Omega|\nabla^2 w_0|^2}\right)
   ^{\delta^{k(\rho)-1}},
\end{equation}
where $C>1$, $\delta\in(0,1)$, only depend on $\gamma$,
$M$ and $\mu$ whereas $\delta_\chi\in(0,1)$, only depends on
$\gamma$, $M$, $\mu$ and $M_0$}.
\begin{proof} [Proof of Step 2]
Let $\rho\leq\overline{\rho}=\min\{\rho_1,\rho_2,\rho_3, \rho_4\}$. Let us apply the three spheres inequality
\eqref{eq:3sph} to the discs of center $x_j$ and radii $r_j$, $3\chi
r_j$, $4\chi r_j$, for $j=1,... ,k(\rho)-1$. Since
$B_{r_{j+1}}(x_{j+1})\subset B_{3\chi r_j}(x_{j})$, for $j=1,...
,k(\rho)-1$, we have
\begin{equation}
   \label{eq:3sph1}
   \int_{B_{r_{j+1}}(x_{j+1})}|\nabla^2 w_0|^2
   \leq C
   \left(\int_{B_{r_j}(x_j)}|\nabla^2 w_0|^2\right)
   ^{\delta_\chi}
   \left(\int_{B_{4\chi r_j}(x_j)}|\nabla^2 w_0|^2\right)
   ^{1-\delta_\chi},
\end{equation}
with $C>1$ and $\delta_\chi$, $0<\delta_\chi<1$, only depending on
$\gamma$, $M$, $\mu$ and $M_0$ which we may rewrite as

\begin{equation}
\label{20.1} \frac{\int_{B_{r_{j+1}}(x_{j+1})}|\nabla^2 w_0|^2}
   {\int_\Omega|\nabla^2 w_0|^2}\leq C
   \left(\frac{\int_{B_{r_{j}}(x_j)}|\nabla^2 w_0|^2}
   {\int_\Omega|\nabla^2 w_0|^2}\right)
   ^{\delta_\chi}.
\end{equation}

{} By iterating \eqref{20.1} over $j=1,... ,k(\rho)-1$,
\eqref{eq:step2_a} follows. Similarly, by applying the three spheres
inequality to the discs $B_{r_j}(x_j),B_{3 r_j}(x_j),B_{4 r_j}(x_j)$
for $j=2,... ,k(\rho)$ and noticing that $B_{r_j}(x_{j-1})\subset
B_{3 r_j}(x_j)$ we can repeat the above argument obtaining
\eqref{eq:step2_b}.
\end{proof}

\medskip
\noindent {\bf Step 3} \textit{There exists $\rho^*$, only depending
on $\gamma$, $M$, $\mu$, $M_0$, $M_1$, $\delta_0$ and $F$, such that
for every $\rho\leq\rho^*$ and for every $\bar x\in \Omega_{s\rho}$
we have}

\begin{equation}
   \label{eq:x_to_y_case2}
   \frac{\int_{B_{\rho}(y)}|\nabla^2 w_0|^2}
   {\int_\Omega|\nabla^2 w_0|^2}\leq C\left(\frac{\int_{B_{\rho}(\overline{x})}|\nabla^2 w_0|^2}
   {\int_\Omega|\nabla^2 w_0|^2}\right)
   ^{\delta_{\chi}^{A_1+B_1\log\frac{1}{\rho}}},\qquad \forall y\in\Omega_{s\rho},
\end{equation}
\textit{where $C>1$, $B_1$ only depends on $\gamma$, $M$, $\mu$ and
$M_0$, whereas $A_1$ only depends on $\gamma$, $M$, $\mu$, $M_0$ and
$M_1$}.

\begin{proof} [Proof of Step 3]

First we consider the case $\bar x\in \Omega_{s\rho}$ satisfying
$\rm{dist}(\bar x,\partial\Omega)\leq\frac{1}{4}$. Let us take
$\rho\leq\bar\rho$. Since, by \eqref{eq:upper_lower_rho_k}, $5
r_{k(\rho)}\leq\frac{h_0}{4}$, it follows that $\Omega_{5
r_{k(\rho)}}$ is connected.

Let $y\in\Omega$ such that
$s\rho<\hbox{dist}(y,\partial\Omega)\leq\frac{h_0}{4}$ and let
$\sigma$ be an arc in $\Omega_{5 r_{k(\rho)}}$ joining $\bar
x_{k(\rho)}$ to $y_{k(\rho)}$. Let us define $\{x_i\}$, $i=1,...,L$,
as follows: $x_1=\bar x_{k(\rho)}$, $x_{i+1}=\sigma(t_i)$, where $t_i=\max\{t
\hbox{ s. t. }|\sigma(t)-x_i|=2 r_{k(\rho)}\}$ if
$|x_i-y_{k(\rho)}|>2 r_{k(\rho)}$, otherwise let $i=L$ and stop the
process. By construction, the discs $B_{r_{k(\rho)}}(x_i)$ are
pairwise disjoint, $|x_{i+1}-x_i|=2 r_{k(\rho)}$, for $i=1,...,
L-1$, $|x_L-y_{k(\rho)}|\leq {2 r_{k(\rho)}}$. Hence we have
\begin{equation}
   \label{eq:upper_L}
L\leq \frac{M_1}{\pi r_{k(\rho)}^2}.
\end{equation}
By an iterated application of the three spheres inequality
\eqref{eq:3sph} over the discs of center $x_i$ and radii $
r_{k(\rho)}$, $3 r_{k(\rho)}$, $4 r_{k(\rho)}$, we obtain
\begin{equation}
   \label{eq:x_k_to_y_k}
   \frac{\int_{B_{r_{k(\rho)}}(y_{k(\rho)})}|\nabla^2 w_0|^2}
   {\int_\Omega|\nabla^2 w_0|^2}\leq C
   \left(\frac{\int_{B_{r_{k(\rho)}}
   (\overline{x}_{k(\rho)})}|\nabla^2 w_0|^2}
   {\int_\Omega|\nabla^2 w_0|^2}\right)
   ^{\delta^L},
\end{equation}
where $C>1$ only depends on $\gamma$, $M$ and $\mu$.

By applying \eqref{eq:step2_a} for $x=\bar x$ and \eqref{eq:step2_b}
for $x=y$, we have

\begin{equation}
   \label{eq:x_to_x_k}
   \frac{\int_{B_{\rho}(y)}|\nabla^2 w_0|^2}
   {\int_\Omega|\nabla^2 w_0|^2}\leq C\left(\frac{\int_{B_{\rho}(\overline{x})}|\nabla^2 w_0|^2}
   {\int_\Omega|\nabla^2 w_0|^2}\right)
   ^{\delta_{\chi}^{k(\rho)-1}\delta^{k(\rho)+L-1}},
\end{equation}
where $C>1$ only depends on $\gamma$, $M$, $\mu$ and $M_0$.

The above estimate holds for every $y\in\Omega$ satisfying
$s\rho<\hbox{dist}(y,\partial\Omega)\leq\frac{{h_0}}{4}$. Next, let
$y\in\Omega$ satisfying
$\hbox{dist}(y,\partial\Omega)>\frac{{h_0}}{4}$. Since
$B_{5r_{k(\rho)}}(\bar x_{k(\rho)})\subset B_{\frac{1}{8}}(\widetilde{\bar x})\subset
\Omega$ we have

\begin{equation}
   \label{eq:x_k_far}
   \hbox{dist}(\bar x_{k(\rho)},\partial\Omega)\geq 5
   r_{k(\rho)},
\end{equation}
and by \eqref{eq:upper_lower_rho_k},
\begin{equation}
   \label{eq:y_far}
   \hbox{dist}(y,\partial\Omega)>\frac{{h_0}}{4}
   \geq 5 r_{k(\rho)}.
\end{equation}
Recalling that $\Omega_{5 r_{k(\rho)}}$ is connected, we can
consider an arc in $\Omega_{5 r_{k(\rho)}}$ joining $\bar
x_{k(\rho)}$ to $y$ and mimic the arguments just seen above over a
chain of $\tilde L$ discs of center $x_j\in\Omega_{5r_{k(\rho)}}$ and radii
$r_{k(\rho)}$, $3r_{k(\rho)}$, $4r_{k(\rho)}$, where
\begin{equation}
   \label{eq:upper_tilde_L}
\tilde L\leq \frac{M_1}{\pi r^2_{k(\rho)}}.
\end{equation}

By an iterated application of the three spheres inequality and by
applying \eqref{eq:step2_a} for $x=\overline{x}$ we have
\begin{equation}
   \label{eq:x_k_to_y_case2}
   \frac{\int_{B_{\rho}(y)}|\nabla^2 w_0|^2}
   {\int_\Omega|\nabla^2 w_0|^2}\leq C\left(\frac{\int_{B_{\rho}(\overline{x})}|\nabla^2 w_0|^2}
   {\int_\Omega|\nabla^2 w_0|^2}\right)
   ^{\delta_{\chi}^{k(\rho)-1}\delta^{\widetilde{L}}},
\end{equation}
where $C>1$ only depends on $\gamma$, $M$, $\mu$ and $M_0$. By
\eqref{eq:x_k_to_y_case2}, \eqref{eq:x_to_x_k}, \eqref{eq:upper_k},
\eqref{eq:upper_L}, \eqref{eq:upper_tilde_L} and since
$\delta_{\chi}<\delta$, we obtain \eqref{eq:x_to_y_case2}.

\medskip

Now let us consider the case $\bar x\in \Omega_{s\rho}$ satisfying
$\rm{dist}(\bar x,\partial\Omega)>\frac{1}{4}$. Let
$\rho\leq\overline{\rho}$ and notice that $B_{s\rho}(\bar x)\subset
B_{\frac{1}{16}}(\bar x)$. Hence, given any point $\tilde x$ such
that $|\bar x-\tilde x|=s\rho$, we have that $B_{\frac{1}{8}}(\tilde
x)\subset\Omega$. Therefore we can mimic the construction in Steps 1
and 2, finding a point $\bar x_{k(\rho)}\in\Omega_{5 r_{k(\rho)}}$,
with $k(\rho)$ satisfying \eqref{eq:lower_k}, \eqref{eq:upper_k} and
$r_{k(\rho)}$ satisfying \eqref{eq:upper_lower_rho_k}, such that the
following inequality holds
\begin{equation}
   \label{eq:x_to_x_k_step4}
   \frac{\int_{B_{r_{k(\rho)}}(\overline{x}_{k(\rho)})}|\nabla^2 w_0|^2}
   {\int_\Omega|\nabla^2 w_0|^2}\leq C
   \left(\frac{\int_{B_{\rho}(\overline{x})}|\nabla^2 w_0|^2}
   {\int_\Omega|\nabla^2 w_0|^2}\right)
   ^{\delta_{\chi}^{k(\rho)-1}}
\end{equation}
with $C>1$ only depending on $\gamma$, $M$ and $M_0$.

Let $y\in\Omega_{s\rho}$ such that
$\hbox{dist}(y,\partial\Omega)\leq\frac{1}{4}$. By the same
arguments seen above, we have
\begin{equation}
   \label{eq:x_to_y_case1_step4}
   \frac{\int_{B_{\rho}(y)}|\nabla^2 w_0|^2}
   {\int_\Omega|\nabla^2 w_0|^2}\leq C
   \left(\frac{\int_{B_{\rho}(\overline{x})}|\nabla^2
   w_0|^2}{\int_\Omega|\nabla^2 w_0|^2}\right)
^{\delta_\chi^{k(\rho)-1}\delta^{k(\rho)+L-1}},
\end{equation}
where $C>1$ only depends on $\gamma$, $M$, $\mu$, $M_0$, and $L$
satisfies \eqref{eq:upper_L}.

Let $y\in\Omega_{s\rho}$ such that
$\hbox{dist}(y,\partial\Omega)>\frac{1}{4}$. By repeating the arguments above, we have
\begin{equation}
   \label{eq:x_to_y_case2_step4}
   \frac{\int_{B_{\rho}(y)}|\nabla^2 w_0|^2}
   {\int_\Omega|\nabla^2 w_0|^2}\leq C
   \left(\frac{\int_{B_{\rho}(\overline{x})}|\nabla^2
   w_0|^2}{\int_\Omega|\nabla^2 w_0|^2}\right)^{\delta_{\chi}^{k(\rho)-1}\delta^{\widetilde{L}}},
\end{equation}
where $\widetilde{L}$ satisfies \eqref{eq:upper_tilde_L} and $C>1$
only depends on $\gamma$, $M$, $\mu$ and $M_0$.

{}From \eqref{eq:x_to_y_case1_step4}, \eqref{eq:x_to_y_case2_step4},
\eqref{eq:upper_k}, \eqref{eq:upper_L}, \eqref{eq:upper_tilde_L},
and recalling that $\delta_\chi<\delta$, we obtain
\eqref{eq:x_to_y_case2}.

Let us cover $\Omega_{(s+1)\rho}$ with internally nonoverlapping
closed squares of side $l=\frac{2\rho}{\sqrt{2}}$. Any such square is
contained in a disc of radius $\rho$ and center at a point of
$\Omega_{s\rho}$ and the number of such squares is dominated by
\begin{equation}
   \label{eq:upper_N}
N=\frac{M_1}{2\rho^2}.
\end{equation}

Therefore, {}from \eqref{eq:x_to_y_case2} and \eqref{eq:upper_N}, we
have
\begin{equation}
   \label{eq:reverse}
\int_{B_{\rho}(\overline{x})}|\nabla^2 w_0|^2 \geq
\int_\Omega|\nabla^2 w_0|^2
\left(\frac{C'\rho^2\int_{\Omega_{(s+1)\rho}}|\nabla^2
w_0|^2}{\int_\Omega|\nabla^2
w_0|^2}\right)^{\delta_{\chi}^{-A_1-B_1\log\frac{1}{\rho}}},
\end{equation}
where $B_1$ and $C'>0$ only depend on $\gamma$, $M$, $\mu$ and $M_0$, whereas $A_1$
only depends on $\gamma$, $M$, $\mu$ and $M_0$ and $M_1$.

By Lemma \ref{lem:Frequency}, assuming also
$\rho\leq\frac{\widetilde{\rho}}{s+1}$, where $\widetilde{\rho}$ has
been introduced in Lemma \ref{lem:Frequency} and only depends
$\gamma$, $M$, $\mu$, $M_0$, $M_1$, $\delta_0$, $F$ we have
\begin{equation}
   \label{eq:reverse_last}\int_{B_{\rho}(\overline{x})}|\nabla^2 w_0|^2
   \geq \left(\widetilde{C}\rho^2\right)^{\delta_{\chi}^{-A_1-B_1\log\frac{1}{\rho}}}\int_\Omega|\nabla^2
   w_0|^2,
\end{equation}
where $\tilde C>0$ only depends on $\gamma$, $M$, $\mu$, $M_0$,
$M_1$ and $\delta_0$. Let us take $\rho\leq\widetilde{C}$. Noticing
that $|\log \rho|\leq \frac{1}{\rho}$, for every $\rho>0$, and that
$\tilde\rho<1$, by straightforward computations we obtain that
\eqref{eq:LPS} holds with $A=3\exp(A_1|\log\delta_\chi|)$,
$B=|\log\delta_\chi|B_1+1$ for every $\rho\leq\rho^*$ with
$\rho^*=\min\{\bar\rho, \frac{\tilde\rho}{s+1},\tilde C\}$, $\rho^*$ only
depending on $\gamma$, $M$, $\mu$, $M_0$, $M_1$, $\delta_0$, and
$F$.
\end{proof}
{\bf Conclusion} We have seen that \eqref{eq:LPS} holds for every
$\rho\leq\rho^*$ and for every $\bar x\in\Omega_{s\rho}$, where
$\rho^*$ only depends on $\gamma$, $M$, $\mu$, $M_0$, $M_1$,
$\delta_0$ and $F$.

If $\rho>\rho^*$ and $\bar
x\in\Omega_{s\rho}\subset\Omega_{s\rho^*}$, then we have
\begin{equation}
   \label{eq:rho>}
   \int_{B_{\rho}(\bar x)}|\nabla^2 w_0|^2
   \geq \int_{B_{\rho^*}(\bar x)}|\nabla^2 w_0|^2
   \geq C^*\int_{\Omega}|\nabla^2 w_0|^2,
\end{equation}
where $C^*$ only depends on  $\gamma$, $M$, $\mu$, $M_0$, $M_1$, $\delta_0$ and
$F$.
Since $\bar x\in \Omega_{s\rho}$, we have that
\begin{equation}
\label{diam_lower}
 \hbox{diam}(\Omega)\geq 2s\rho,
\end{equation}
and, on the other hand,
\begin{equation}
\label{diam_upper}
 \hbox{diam}(\Omega)\leq C_2,
\end{equation}
with $C_2$ only depending on $M_0$ and $M_1$, so that
\begin{equation}
   \label{eq:upper_for_rho>}
   \frac{2s}{C_2}\leq\frac{1}{\rho}.
\end{equation}
By \eqref{eq:rho>} and \eqref{eq:upper_for_rho>}, we have
\begin{equation}
   \label{eq:rho>finale}
   \int_{B_{\rho}(\bar x)}|\nabla^2 w_0|^2
   \geq \frac{C}{\exp\left[A\left(\frac{1}{\rho}\right)^B\right]}
   \int_{\Omega}|\nabla^2 w_0|^2,
\end{equation}
with $C=C^*\exp\left[A\left(\frac{2s}{C_2}\right)^B\right]$.

\end{proof}

%
\bibliographystyle{alpha}

\begin{thebibliography}{ABRV01}

\bibitem{l:ad}
R. A. Adams, Sobolev Spaces,
Academic Press, New York, 1975.


\bibitem{l:amr03}
G. Alessandrini, A. Morassi and E. Rosset, Size estimates, in
Inverse Problems: Theory and Applications,
G. Alessandrini and G. Uhlmann, eds., Contemp. Math.,
333, 1--33, AMS, Providence, RI, 2003.

\bibitem{l:amr04}
G. Alessandrini, A. Morassi and E. Rosset,
Detecting an inclusion in an elastic body by boundary
measurements,
SIAM Rev., 46, (2004) 477--498.

\bibitem{l:amrv08}
G. Alessandrini, A. Morassi, E. Rosset and S. Vessella,
On doubling inequalities for elliptic systems,
J. Math. Anal. Appl., 357, (2009) 349-355.

\bibitem{l:ar}
G. Alessandrini and E. Rosset,
The inverse conductivity problem with one measurement: bounds on the
size of the unknown object,
SIAM J. Appl. Math., 58 (4), (1998) 1060--1071.

\bibitem{A-R-R-V} G. Alessandrini, L. Rondi, E. Rosset and S. Vessella,
The stability for the Cauchy problem for elliptic
equations,
Inverse Problems, 25, (2009) 1--47.

\bibitem{l:ars}
G. Alessandrini, E. Rosset and J. K. Seo,
Optimal size estimates for the inverse conductivity problem with one
measurement,
Proc. Amer. Math. Soc., 128, (2000) 53--64.

\bibitem{l:dlw}
M. Di Cristo, C.L. Lin and J.N. Wang,
Quantitative uniqueness estimates for the
shallow shell system and their application to
an inverse problem, preprint, 2011.

\bibitem{l:fi} G. Fichera, Existence Theorems in Elasticity,
Handbuch der Physik, vol. VI, Springer-Verlag, Berlin, Heidelberg,
New York 1972.

\bibitem{l:gu}
M. E. Gurtin, The Linear Theory of Elasticity,
Handbuch der Physik, vol. VI, Springer-Verlag, Berlin, Heidelberg,
New York 1972.

\bibitem{ikehata98}
M. Ikehata,
Size estimation of inclusion,
J. Inverse Ill-Posed Probl., 6, (1998) 127--140.

\bibitem{l:kss}
H. Kang, J. K. Seo and D. Sheen,
The inverse conductivity problem with one measurement:
Stability and estimation of size,
SIAM J. Math. Anal., 28, (1997) 1389--1405.

\bibitem{l:LDiN09}
Y. Lei, M. Di Cristo and G. Nakamura, Size estimates in
thermography, Appl. Anal., 88, (2009) 831-–846.

\bibitem{l:mr04}
A. Morassi and E. Rosset,
Stable determination of cavities in elastic bodies,
Inverse Problems, 20, (2004) 453--480.

\bibitem{M-R-V1} A. Morassi, E. Rosset and S. Vessella,
Size estimates for inclusions in an elastic plate by boundary
measurements,
Indiana Univ. Math. J., 56, (2007) 2325--2384.

\bibitem{M-R-V2} A. Morassi, E. Rosset and S. Vessella,
Detecting general inclusions in elastic plates,
Inverse Problems, 25, (2009) Paper 045009, 1--14.

\bibitem{M-R-V4} A. Morassi, E. Rosset and S. Vessella,
Stable determination of a rigid inclusion in an anisotropic
plate, \textit{Preprint}, 2011.

\bibitem{M-R-V5} A. Morassi, E. Rosset and S. Vessella,
Sharp three sphere inequality for perturbations of a product of
two second order elliptic operators and stability for the Cauchy
problem for the anisotropic plate equation,
J. Funct. Anal. (2011), doi:10.1016/j.jfa.2011.05.011.


\end{thebibliography}
%
%

\end{document}